\numberwithin{equation}{section}
\theoremstyle{plain}
\newtheorem{theorem}{Theorem}[section]
\newtheorem{proposition}[theorem]{Proposition}
\newtheorem{lemma}[theorem]{Lemma}
\newtheorem{corollary}[theorem]{Corollary}
\theoremstyle{remark}
\newtheorem{remark}[theorem]{Remark}
\DeclarePairedDelimiter\abs{\lvert}{\rvert}
\DeclarePairedDelimiter\norm{\lVert}{\rVert}
\def\E{\mathbb{E}}
\DeclarePairedDelimiterXPP\EE[1]{\E}{\lparen}{\rparen}{}{#1} 
\DeclarePairedDelimiterX\Set[1]\{\}{%

#1
}
\newcommand*{\N}{\mathbb{N}}
\newcommand*{\R}{\mathbb{R}}
\newcommand*{\calF}{\mathcal{F}}
\newcommand*{\frakf}{\mathfrak{f}}
\newcommand{\dif}{\mathop{}\!\mathrm{d}} 
\newcommand{\Csm}{C_{\mathrm{sm}}}
\newcommand{\CH}{C_{\mathrm{H}}}
\title[$A_{1}$ for martingale maximal function]{$A_{1}$ Fefferman--Stein inequality\\ for maximal functions of martingales\\ in uniformly smooth spaces}
\author[P.~Zorin-Kranich]{Pavel Zorin-Kranich}
\address{Mathematical Institute\\ University of Bonn}
\email{pzorin@uni-bonn.de}
\subjclass[2020]{60G42 (Primary) 60E15, 60G46, 60G48 (Secondary)}
\begin{document}
\maketitle
\begin{abstract}
Let $f$ be a martingale with values in a uniformly $p$-smooth Banach space and $w$ any positive weight.
We show that $\E (f^{*} \cdot w) \lesssim \E(S_{p}f \cdot w^{*})$, where $\cdot^{*}$ is the martingale maximal operator and $S_{p}$ is the $\ell^{p}$ sum of martingale increments.
\end{abstract}

\section{Introduction}
A Banach space $(X,\abs{\cdot})$ is called \emph{$(p,\Csm)$-smooth} (with $p\in [1,2]$ and $\Csm \in \R_{>0}$) if, for every $x,y\in X$, we have
\begin{equation}
\label{eq:pC-smooth}
\frac{1}{2} \Bigl(\abs{x+y}^{p} + \abs{x-y}^{p}\Bigr) \leq \abs{x}^{p} + \Csm^{p}\abs{y}^{p}.
\end{equation}
The most basic examples are that, for any $r\in (1,2]$, any $L^{r}$ space is $(r,1)$-smooth, see \cite[(10.33)]{MR3617459} (this is also a consequence of Clarkson's inequality), and, for any $r \in [2,\infty)$, any $L^{r}$ space is $(2,r-1)$-smooth, this follows from \cite[(10.37)]{MR3617459} and Jensen's inequality.
In general, unless $X$ is zero-dimensional, we must have $\Csm\geq 1$, as can be seen by taking $x=0$ in \eqref{eq:pC-smooth}.

Our main result is the following.
\begin{theorem}
\label{thm:w-BDG}
Let $p \in (1,2]$.
Let $(f_{n})_{n\in\N}$ be a martingale on a filtered probability space $(\Omega,(\calF_{n})_{n})$ with values in a $(p,\Csm)$-smooth Banach space $X$ and $w : \Omega \to \R_{\geq 0}$ a measurable function (called a \emph{weight}).
Then,
\begin{equation}
\label{eq:w-BDG}
\E (f^{*} w)
\leq
84p' \Csm \E (S_{p}f \cdot w^{*}),
\end{equation}
where $p'$ denotes the Hölder conjugate $1/p'+1/p=1$, and
\[
S_{p}f = \bigl( \abs{f_{0}}^{p} + \sum_{n=1}^{\infty} \abs{f_{n}-f_{n-1}}^{p} \bigr)^{1/p},
\quad
f^{*} = \sup_{n\in\N} \abs{f_{n}},
\quad
w^{*} = \sup_{n\in\N} \E(w|\calF_{n}).
\]
\end{theorem}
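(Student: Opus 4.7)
The plan is to combine a stopping-time reduction that replaces the left-hand weight $w$ by its martingale maximal function $w^{*}$ with a good-$\lambda$ inequality adapted to the weighted setting. By the layer-cake formula and the stopping time $\tau_{\lambda} = \inf\{n : \abs{f_{n}} > \lambda\}$, the event $\{f^{*} > \lambda\} = \{\tau_{\lambda} < \infty\}$ is $\calF_{\tau_{\lambda}}$-measurable, so the tower property gives
\[
\E(w \one_{f^{*} > \lambda}) = \E\bigl(\E(w \mid \calF_{\tau_{\lambda}}) \one_{\tau_{\lambda} < \infty}\bigr) \leq \E(w^{*} \one_{f^{*} > \lambda}),
\]
and integrating in $\lambda$ yields $\E(f^{*} w) \leq \E(f^{*} w^{*})$. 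It therefore suffices to establish $\E(f^{*} w^{*}) \leq 84 p' \Csm \E(S_{p} f \cdot w^{*})$.

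For this remaining bound, the $(p, \Csm)$-smoothness hypothesis \eqref{eq:pC-smooth}, together with Jensen's inequality applied to the convex map $y \mapsto \abs{f_{n-1} - y}^{p}$ and the mean-zero property $\E(d_{n} \mid \calF_{n-1}) = 0$ of the martingale difference, implies that $\abs{f_{n}}^{p} - 2 \Csm^{p} S_{p}(f)_{n}^{p}$ is a supermartingale; this produces Pisier's bound $\E\abs{f_{n}}^{p} \leq 2 \Csm^{p} \E(S_{p} f)^{p}$ and its conditional version. Combining this with Doob's $L^{p}$ inequality for the scalar submartingale $\abs{f_{n}}$ (contributing the factor $p'$) and a stopping-time analysis of the shifted martingale $g_{n} = f_{\tau_{\lambda} + n} - f_{\tau_{\lambda}}$, I would aim to prove a weighted good-$\lambda$ estimate of the form
\[
\E\bigl(w^{*} ;\ f^{*} > \beta\lambda,\ S_{p} f \leq \delta\lambda\bigr) \leq C \bigl(p' \Csm \delta / (\beta - 1 - \delta)\bigr)^{p} \E\bigl(w^{*} ;\ f^{*} > \lambda\bigr)
\]
for parameters $\beta > 1$ and $0 < \delta < \beta - 1$. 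Integrating in $\lambda$ and absorbing the self-referential $\E(f^{*} w^{*})$ term on the right (possible once $\delta$ is small enough that the absorption constant is strictly below $1$) yields a bound linear in $p' \Csm$; the explicit value $84$ then comes from optimizing over $\beta$ and $\delta$.

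\textbf{Main obstacle.} The principal difficulty lies in the weighted form of the good-$\lambda$ step, since $w^{*}$ is not $\calF_{\tau_{\lambda}}$-measurable: it involves $\E(w \mid \calF_{n})$ for $n > \tau_{\lambda}$, so one cannot directly pull $w^{*}$ out of a conditional expectation given $\calF_{\tau_{\lambda}}$. A natural remedy is the pathwise decomposition $w^{*} = w^{*}_{\tau_{\lambda}} \vee \sup_{n > \tau_{\lambda}} \E(w \mid \calF_{n})$, treating the second piece as a martingale maximal function on the post-$\tau_{\lambda}$ filtration to which Doob's inequality applies. A secondary technical point is the control of the extra jump picked up by the stopped square function $S_{p}(g^{\rho})$ when $g$ is stopped at $\rho = \inf\{n : S_{p}(g)_{n} > \delta\lambda\}$; this is typically handled either by absorbing the jump into $S_{p} f$ directly or via a Davis-type decomposition of the martingale into small-jump and large-jump components.
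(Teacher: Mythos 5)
The paper's proof is by a Bellman function argument: Section~\ref{sec:Bellman-5} constructs a five-variable function $U(x,m,q,u,v)$ and proves the pointwise concavity-type estimate \eqref{eq:concavity} (Proposition~\ref{prop:concavity}), which is then applied along the trajectory $(f_n, f^*_n, q_n, w_n, w^*_n)$ and telescoped. Your proposal is a genuinely different route (stopping-time reduction followed by a weighted good-$\lambda$ inequality), so the comparison is substantive.

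Your first reduction is correct: since $\{f^*>\lambda\}=\{\tau_\lambda<\infty\}$ is $\calF_{\tau_\lambda}$-measurable and $\E(w\mid\calF_{\tau_\lambda})=w_{\tau_\lambda}\leq w^*$ on that event, integrating in $\lambda$ gives $\E(f^*w)\leq\E(f^*w^*)$. The issue is the second step. The unweighted good-$\lambda$ inequality for martingales works because one estimates a \emph{conditional probability} given $\calF_{\tau_\lambda}$ of the post-$\tau_\lambda$ martingale $g$ exceeding $(\beta-1-\delta)\lambda$, and that conditional probability bound integrates cleanly against the ($\calF_{\tau_\lambda}$-measurable) indicator of $\{f^*>\lambda\}$. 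To upgrade this to $\E(w^*;\,f^*>\beta\lambda,\,S_pf\leq\delta\lambda)\leq\epsilon\,\E(w^*;\,f^*>\lambda)$ you must integrate that conditional bound against $w^*$, which is \emph{not} $\calF_{\tau_\lambda}$-measurable. You identify this as the main obstacle, but the remedy you sketch does not close the gap: splitting $w^*=w^*_{\tau_\lambda}\vee\sup_{n>\tau_\lambda}\E(w\mid\calF_n)$ and applying Doob to the second piece still requires a pointwise comparison between that piece and $w^*_{\tau_\lambda}$ on the bad event, and no such comparison is available without some uniform $A_\infty$-type control of $w^*$ (which is not automatic in the martingale setting, as atoms in the filtration can shrink arbitrarily fast, and you have not established it). In effect, proving a weighted good-$\lambda$ inequality for $(f^*,S_pf,w^*)$ with a constant independent of $w$ is exactly the difficulty that the Bellman function approach is designed to bypass by working pointwise and encoding the weight via the pair $(u,v)=(w_n,w^*_n)$ in the state variables.

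There is also a quantitative concern. Even if the good-$\lambda$ step were salvaged, your self-referential absorption requires $\epsilon\beta<1$ where $\epsilon\asymp(\Csm\delta/(\beta-1-\delta))^p$, which forces $\delta\lesssim 1/\Csm$ and yields a final constant of order $\Csm$, consistent with \eqref{eq:w-BDG}; but the Davis-type decomposition you invoke to control the jump at the stopping time $\rho$ introduces an additional term whose contribution you have not tracked, and the factor $p'$ in \eqref{eq:w-BDG} emerges in the paper from the AM--GM split $\abs{x}\leq\frac{1}{p'}m+\frac{1}{p}\frac{\abs{x}^p}{m^{p-1}}$ built into \eqref{eq:Bellman-fct}, not from Doob's $L^p$ inequality as you suggest. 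The proposal, as written, is an outline with the central estimate missing; the approach might be completable, but what you have is not a proof of the stated inequality, and the obstruction you flag is genuine rather than merely technical.
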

In order to put Theorem~\ref{thm:w-BDG} into context, we list the previously known cases (in each of which the inequality \eqref{eq:w-BDG} is in fact known with a smaller constant).
\begin{enumerate}
\item The unweighted ($w=1$) scalar ($X=\R$) case is one of the Burkholder--Davis--Gundy inequalities \cite{MR0268966}.
\item The scalar ($X=\R$) case, which served as the main inspiration for this work, was proved in \cite{MR3688518}.
\item The unweighted ($w=1$) case is one of the implications in the characterization of martingale type, see \cite[Theorem 10.60]{MR3617459}.
\end{enumerate}

We follow \cite{MR3688518} in calling the inequality \eqref{eq:w-BDG} a Fefferman--Stein inequality, in reference to \cite[\textsection 3]{MR0284802}, where the first inequality involving the pair of weights $w,w^{*}$ appeared (see \cite[Theorem 3.2.3]{MR3617205} for a martingale version).
In order to distinguish this result from many others due to Fefferman and Stein, we prepend the designation ``$A_{1}$'', which in the one-weight theory stands for the condition $w^{*} \leq [w]_{A_{1}}w$.
The pair $w,w^{*}$ can be seen as satisfying a two-weight version of the $A_{1}$ condition.

For dyadic martingales, assuming $w\in A_{\infty}$, an inequality similar to \eqref{eq:w-BDG} with $w^{*}$ replaced by $w$ is known \cite[Theorem 2]{MR352854}.
The recent result \cite[Theorem 1.3]{BrzOse2021} (which applied to martingale transforms in place of the square function) suggests that no such inequality is possible for general martingales.

The advantage of weighted estimates such as \eqref{eq:w-BDG} is that they can be easily extrapolated to estimates for other moments, see Appendix~\ref{sec:extrapolation}.
We illustrate the extrapolation idea with a basic argument, which shows that the linear dependence on $\Csm$ in \eqref{eq:w-BDG} is optimal.
Assume that the inequality
\[
\E (f^{*} w)
\leq
K \E (S_{p}f \cdot w^{*})
\]
holds for all weights $w$.
By Hölder's inequality and Doob's maximal inequality, see e.g.\ \cite[Theorem 3.2.2]{MR3617205}, for any $r\in (1,\infty)$, we obtain
\[
\E (f^{*} w)
\leq
K \norm{S_{p}f}_{L^{r}} \norm{w^{*}}_{L^{r'}}
\leq
Kr \norm{S_{p}f}_{L^{r}} \norm{w}_{L^{r'}}.
\]
Since $L^{r'}$ is the dual space of $L^{r}$ , this implies
\begin{equation}
\label{eq:BDG-Lr}
\norm{ f^{*} }_{L^{r}}
\leq
Kr \norm{S_{p}f}_{L^{r}}.
\end{equation}
Incidentally, the linear growth in $r$ of the constant in the inequality \eqref{eq:BDG-Lr} is optimal in the scalar case $X=\R$, $p=2$, see \cite[Theorem 3.2]{MR365692}.

Let now $X$ be a Banach space such that the inequality \eqref{eq:BDG-Lr} holds with $p=r \in (1,2]$ for all martingales $f$ with values in $X$.
By Pisier's renorming theorem \cite[Theorem 10.22]{MR3617459}, the space $X$ admits an equivalent norm that is $(p,CK)$-smooth for some $C$ depending only on $p$.
In this sense, the linear dependence of \eqref{eq:w-BDG} on $\Csm$ is optimal.

The dependence of the bound \eqref{eq:w-BDG} on $p'$ does not seem natural, since it does not appear in the corresponding non-maximal bound \eqref{eq:w-non-maximal}.
Also, the $p=1$ bound clearly holds with constant $1$.
Therefore, we find it reasonable to conjecture that $84p'$ in \eqref{eq:w-BDG} can be replaced by a constant that does not depend on $p$.

\subsection{Non-martingale version}
The proof of Theorem~\ref{thm:w-BDG} in fact yields a more general statement, involving processes with a structure that was introduced in \cite[Theorem 3.1]{arxiv:2006.06964}.
Let $(\Omega,(\calF_{n})_{n\in\N})$ be a filtered probability space and $(g_{n})_{n\in\N}$, $(f_{n})_{n\in\N}$, $(\tilde{f}_{n})_{n\in\N}$ be adapted processes with values in a $(p,\Csm)$-smooth Banach space $X$.
Assume that $f_{0}=\tilde{f}_{0}=0$, and for every $n\in\N_{>0}$ we have
\[
f_{n} = \tilde{f}_{n-1} + (g_{n}-g_{n-1}),
\quad
\abs{\tilde{f}_{n}} \leq \abs{f_{n}}.
\]
Then,
\begin{equation}
\label{eq:w-BDG:non-martingale}
\E (f^{*} w)
\leq
84p' \Csm \E (S_{p}g \cdot w^{*}).
\end{equation}
As in \eqref{eq:BDG-Lr}, for $r\in [1,\infty)$, this implies
\begin{equation}
\label{eq:BDG-Lr:non-martingale}
\norm{f^{*}}_{L^{r}}
\leq
84p' \Csm r \norm{ S_{p}g }_{L^{r}}.
\end{equation}
The Rosenthal-type inequality in \cite[Theorem 3.1]{arxiv:2006.06964} states that, if $X$ is a $(2,\Csm)$-space, then
\begin{equation}
\label{eq:Rosenthal}
\norm{f^{*}}_{L^{r}}
\leq
30 r \norm{\sup_{n} \abs{g_{n}-g_{n-1}} }_{L^{r}}
+ 40 \Csm r^{1/2} \norm{ sg }_{L^{r}},
\end{equation}
where $sg$ is the conditional square function:
\[
sg = \Bigl( \sum_{n} \E(\abs{g_{n}-g_{n-1}}^{2}|\calF_{n-1}) \Bigr)^{1/2}.
\]
For $r\geq 2$, \eqref{eq:Rosenthal} implies \eqref{eq:BDG-Lr:non-martingale}, since
\[
\norm{sg}_{L^{r}} \leq (r/2)^{1/2} \norm{S_{2}g}_{L^{r}},
\quad r\in [2,\infty),
\]
by Doob's maximal inequality and duality.
On the other hand, the version of \eqref{eq:Rosenthal} for $r<2$ in \cite[Corollary 3.6]{arxiv:2006.06964} is not obviously related to \eqref{eq:BDG-Lr:non-martingale}.

\subsection{Outline of the article}
The proof of Theorem~\ref{thm:w-BDG} is based on the Bellman function technique; we refer to the books \cite{MR2964297,VasVol_Bellman_book} for other instances of this technique.

In Section~\ref{sec:unif-smooth}, we review the characterization of uniform smoothness that will be used in the proofs of our main results.

In Section~\ref{sec:Bellman-4}, we prove the inequality \eqref{eq:w-non-maximal}, which is a non-maximal version of Theorem~\ref{thm:w-BDG}.
The proof of that inequality uses a Bellman function that is adapted from \cite{MR3688518}.
Although that inequality will not be used in the proof of Theorem~\ref{thm:w-BDG}, the Bellman function estimate in Proposition~\ref{prop:concavity0} will be used again there.

In Section~\ref{sec:Bellman-5}, we prove the full Theorem~\ref{thm:w-BDG}.
This is accomplished using a Bellman function that combines features present in the articles \cite{MR3322322} and \cite{MR3688518}.

In Appendix~\ref{sec:extrapolation}, we give a sample application of the weighted bound \eqref{eq:w-BDG}.

\section{General facts about uniformly smooth spaces}
\label{sec:unif-smooth}
We will use the regularity properties of the norm on a uniformly smooth Banach space that can be found e.g.\ in \cite[Lemma 2.1]{MR2861433}.
We take the opportunity to streamline the deduction of these properties from \eqref{eq:pC-smooth}.
The following lemma is a minor variant of \cite[Lemma I.1.3]{MR1211634} (there, the case $\phi(x)=\abs{x}$ is considered).
\begin{lemma}
\label{lem:Frechet-dif}
Let $(X,\abs{\cdot})$ be a Banach space, $\phi : X \to \R$ a convex function, and $x \in X$ such that
\begin{equation}
\label{eq:Frechet-dif:Lipschitz}
L := \limsup_{y \to 0} \frac{\abs{\phi(x+y)-\phi(x)}}{\abs{y}} < \infty,
\quad\text{and}
\end{equation}
\begin{equation}
\label{eq:Frechet-dif:uniform}
\lim_{y\to 0} \frac{\phi(x+y)+\phi(x-y)-2\phi(x)}{\abs{y}} = 0.
\end{equation}
Then $\phi$ is Fr\'echet differentiable at $x$, and its derivative satisfies $\abs{\phi'(x)}_{X'} \leq L$.
\end{lemma}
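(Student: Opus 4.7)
My plan is to take $\phi'(x)$ to be a supporting linear functional (a subgradient) of $\phi$ at $x$, bound its dual norm via the Lipschitz hypothesis \eqref{eq:Frechet-dif:Lipschitz}, and then squeeze the Fréchet error by the symmetric second difference \eqref{eq:Frechet-dif:uniform}.

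First I would produce the subgradient: since $\phi : X \to \R$ is convex and everywhere finite, a standard Hahn–Banach argument applied to the epigraph of $\phi$ yields $\xi \in X'$ with
\[
\phi(x+y) - \phi(x) \geq \xi(y) \quad \text{for all } y \in X.
\]
Restricting this to small $y$ and combining with \eqref{eq:Frechet-dif:Lipschitz}, for every $\varepsilon > 0$ one has $\xi(y) \leq \phi(x+y) - \phi(x) \leq (L+\varepsilon)\abs{y}$ once $\abs{y}$ is sufficiently small. Homogeneity of $\xi$ then gives $\abs{\xi}_{X'} \leq L + \varepsilon$, and letting $\varepsilon \to 0$ yields $\abs{\xi}_{X'} \leq L$. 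I would set $\phi'(x) := \xi$.

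Next I would replace $y$ by $-y$ in the subgradient inequality, obtaining $\phi(x-y) - \phi(x) \geq -\xi(y)$. Adding this to the inequality with $+y$ gives the sandwich
\[
0 \leq \phi(x+y) - \phi(x) - \xi(y) \leq \phi(x+y) + \phi(x-y) - 2\phi(x).
\]
Dividing by $\abs{y}$ and invoking \eqref{eq:Frechet-dif:uniform} forces the right-hand side to $0$ as $y \to 0$, which is precisely Fréchet differentiability at $x$ with derivative $\xi$.

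I do not anticipate any substantial obstacle: the only non-elementary step is the existence of the subgradient, which is standard Hahn–Banach for finite-valued convex functions on a Banach space. The rest is essentially the one-line observation that the convexity lower bound combined with its reflection sandwiches the first-order remainder by the symmetric second difference, which is the argument of \cite{MR1211634} specialised from $\phi = \abs{\cdot}$ to a general convex $\phi$.
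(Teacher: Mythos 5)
Your proof is correct, and it takes a genuinely different route from the paper. The paper constructs the candidate derivative $A(y)$ as the one-sided directional derivative $\lim_{t\to0+}(\phi(x+ty)-\phi(x))/t$ (which exists by monotonicity of convex difference quotients), then uses \eqref{eq:Frechet-dif:uniform} once to get the oddness $A(y)+A(-y)=0$, a second time to upgrade the directional limit to a locally uniform one, and finally checks that $A$ is both convex and concave, hence affine, hence linear. You instead invoke Hahn--Banach to produce a subgradient $\xi\in X'$ at $x$ outright, so linearity and boundedness are free; the norm bound $\abs{\xi}_{X'}\le L$ follows immediately from the subgradient inequality plus \eqref{eq:Frechet-dif:Lipschitz}, and the sandwich
\[
0 \;\le\; \phi(x+y)-\phi(x)-\xi(y) \;\le\; \phi(x+y)+\phi(x-y)-2\phi(x)
\]
uses \eqref{eq:Frechet-dif:uniform} exactly once to force the Fr\'echet remainder to zero. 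Your argument is shorter and closer to the classical Deville--Godefroy--Zizler proof that the paper cites, at the cost of invoking subdifferentiability of a continuous convex function (a Hahn--Banach separation argument on the epigraph), whereas the paper's version is self-contained and avoids any explicit appeal to Hahn--Banach. One small point worth spelling out: the existence of the subgradient at $x$ is not automatic for a merely finite convex function on an infinite-dimensional $X$; it is hypothesis \eqref{eq:Frechet-dif:Lipschitz} that provides the local boundedness, hence continuity, at $x$ needed for $\partial\phi(x)\ne\emptyset$, so it would be cleaner to say so explicitly rather than treating subdifferentiability as free.
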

\begin{proof}
Convexity implies that, for any $y\in X$, the function $t \mapsto \frac{\phi(x+ty)-\phi(x)}{t}$ is monotonically increasing in $t\in (0,\infty)$.
Therefore, there exist one-sided directional derivatives
\[
A(y) := \lim_{t\to 0+} \frac{\phi(x+ty)-\phi(x)}{t},
\]
and $\abs{A(y)}\leq L\abs{y}$ by \eqref{eq:Frechet-dif:Lipschitz}.
We will show that $A$ is the Fr\'echet derivative of $\phi$ at $x$.
From \eqref{eq:Frechet-dif:uniform}, it follows that $A(y)+A(-y)=0$ for all $y\in X$.
Hence, again by \eqref{eq:Frechet-dif:uniform}, we obtain
\begin{align*}
\MoveEqLeft
\sup_{\abs{y}\leq 1} \frac{\phi(x+ty)-\phi(x)}{t} - A(y)
\\ &\leq
\sup_{\abs{y}\leq 1} \frac{\phi(x+ty)-\phi(x)}{t} - A(y) + \frac{\phi(x-ty)-\phi(x)}{t} - A(-y)
\\ &=
\sup_{\abs{y}\leq 1} \frac{\phi(x+ty)+\phi(x-ty)-2\phi(x)}{t}
\xrightarrow{t\to 0}
0.
\end{align*}
This shows that the difference quotients of $\phi$ converge to $A$ locally uniformly.
It remains to show that $A$ is linear.
To this end, we first observe that $A$ is convex, since it is the limit of the convex functions $y \mapsto (\phi(x+ty)-\phi(x))/t$.
Then also $y\mapsto -A(y)=A(-y)$ is convex, so that $A$ is concave.
It follows that $A$ is affine.
Finally, $A(0)=0$.
\end{proof}

Let $(X,\abs{\cdot})$ be a $(p,\Csm)$-smooth Banach space with $p \in (1,2]$ and
\[
\phi(x) := \abs{x}^{p},
\quad x\in X.
\]
The hypothesis \eqref{eq:Frechet-dif:uniform} of Lemma~\ref{lem:Frechet-dif} follows directly from the definition \eqref{eq:pC-smooth}.
It is also easy to see that, for any $x\in X$, the hypothesis \eqref{eq:Frechet-dif:Lipschitz} holds with $L=L(x)=p\abs{x}^{p-1}$.
Therefore, Lemma~\ref{lem:Frechet-dif} implies that the function $\phi$ is Fr\'echet differentiable, and $\abs{\phi'(x)}_{X'} \leq p \abs{x}^{p-1}$.

Let $\CH \in [0,\infty]$ be the smallest constant such that, for any $x,y\in X$, we have
\begin{equation}
\label{eq:phi'-Holder}
\abs{\phi'(x)-\phi'(y)}_{X'} \leq \CH^{p} \abs{x-y}^{p-1}.
\end{equation}
The proof of \cite[Lemma V.3.5]{MR1211634} (with $\alpha=p-1$) shows that
\begin{equation}
\label{eq:CH<Csm}
\CH^{p} \leq 2^{p} \Csm^{p}.
\end{equation}
Conversely, for any $x,y\in X$, we have
\begin{align*}
\frac{1}{2} \Bigl( \abs{x+y}^{p} + \abs{x-y}^{p} \Bigr)
&=
\frac{1}{2} \Bigl( \phi(x) + \int_{0}^{1} \phi'(x+ty) y \dif t + \phi(x) + \int_{0}^{1} \phi'(x-ty) (-y) \dif t \Bigr)
\\ &\leq
\phi(x) + \frac{1}{2} \int_{0}^{1} \abs{\phi'(x+ty) - \phi'(x-ty)}_{X'} \abs{y} \dif t
\\ &\leq
\phi(x) + \frac{1}{2} \int_{0}^{1} \CH^{p} \abs{2ty}^{p-1} \abs{y} \dif t
=
\abs{x}^{p} + \frac{2^{p-2} \CH^{p}}{p} \abs{y}^{p}.
\end{align*}
Therefore, $\Csm^{p} \leq 2^{p-2}\CH^{p}/p$, so the conditions \eqref{eq:phi'-Holder} and \eqref{eq:pC-smooth} are equivalent.
However, we find the condition \eqref{eq:phi'-Holder} more convenient to use, so all subsequent results will be formulated in terms of $\CH$.
We note that $\CH^{p} \geq p$, as can be seen by considering a one-dimensional subspace of $X$.

\section{Bellman function for the martingale}
\label{sec:Bellman-4}
In this section, we adapt the Bellman function from \cite{MR3688518} to our setting.
This will allow us to prove the inequality
\begin{equation}
\label{eq:w-non-maximal}
\E (\abs{f} w)
\leq
9 \CH \E ( S_{p}f \cdot w^{*}).
\end{equation}
Note that, unlike in \eqref{eq:w-BDG}, the constant on the right-hand side of \eqref{eq:w-non-maximal} does not explicitly depend on $p$.

For $x\in X$, $q\geq 0$, and $0 \leq u \leq v$, let
\begin{equation}
\label{eq:Bellman-4}
U(x,q,u,v) := u (\abs{x}^p/\CH^{p}+q)^{1/p} - Cvq^{1/p} + \tilde{C} vq^{1/p} \ln(1+u/v).
\end{equation}
We denote the $x$- and the $u$-derivatives of $U$ by $U_{x}$ and $U_{u}$, respectively.
Note that $U$ is indeed Fr\'echet differentiable in $x$, and the derivative is given by
\[
U_{x}(x,q,u,v)h = \frac{\phi'(x)h}{p\CH^{p}(\abs{x}^{p}/\CH^{p}+q)^{1-1/p}}.
\]
The main feature of the function \eqref{eq:Bellman-4} is the following concavity property.,

\begin{proposition}
\label{prop:concavity0}
Suppose that $C=9$ and $\tilde{C} = 4\sqrt{2}$.
Then, for any $x,d\in X$, $q,u,v\in \R_{\geq 0}$, and $e\in\R$ with $u \leq v$ and $0 \leq u+e$, we have
\begin{equation}
\label{eq:concavity0}
U(x+d,q+\abs{d}^{p},u+e,(u+e)\vee v)
\leq
U(x,q,u,v) + U_{x}(x,q,u,v)d + U_{u}(x,q,u,v)e.
\end{equation}
\end{proposition}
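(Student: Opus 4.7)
The plan is to verify \eqref{eq:concavity0} by decomposing the left minus right into three pieces, one for each summand of $U$, and by separately analysing the cases $u+e\leq v$ and $u+e>v$ dictated by the max in $(u+e)\vee v$.

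First I would isolate the contribution from the main term $u\,(\abs{x}^p/\CH^p+q)^{1/p}$. Writing $A=\abs{x}^p/\CH^p+q$ and $A'=\abs{x+d}^p/\CH^p+q+\abs{d}^p$, integration of \eqref{eq:phi'-Holder} gives $\phi(x+d)-\phi(x)-\phi'(x)d\leq \CH^p\abs{d}^p/p$, hence
\[
A'-A\leq \phi'(x)d/\CH^p + \tfrac{p+1}{p}\abs{d}^p.
\]
Combined with the chord estimate $(A')^{1/p}-A^{1/p}\leq (A'-A)/(pA^{1-1/p})$ coming from the concavity of $t\mapsto t^{1/p}$, the first-order Taylor expansion of $u\,A^{1/p}$ in $(x,u)$ leaves a residual of size $\lesssim (u+e)\abs{d}^p/A^{1-1/p}$ plus a cross term linear in $e\,\phi'(x)d$. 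This residual has to be absorbed by the other two summands of $U$.

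In the easy case $u+e\leq v$, the variable $v$ is unchanged, and the absorption is performed by the negative increment $-Cv\bigl[(q+\abs{d}^p)^{1/p}-q^{1/p}\bigr]$ from the second term, together with the quadratic gain coming from the concavity of $u\mapsto\ln(1+u/v)$ in the third term. Using $u\leq v$ and the bound $\abs{\phi'(x)}_{X'}\leq p\abs{x}^{p-1}$, everything reduces to a scalar inequality in $\abs{d}$, $e$, and $A$, calibrated to hold precisely for $C=9$ and $\tilde C=4\sqrt{2}$.

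The harder case is $u+e>v$, in which $v$ jumps to $u+e$ and the logarithm jumps to $\ln 2$. Here one loses some of the negative room that $-Cvq^{1/p}$ provided, because $v$ itself grows, but on the new boundary $v_{\mathrm{new}}=u+e$ the second and third terms of $U$ combine into $(\tilde C\ln 2-C)(u+e)(q+\abs{d}^p)^{1/p}$; since $\tilde C\ln 2<C$ for the given constants, this still provides a useful negative contribution, and the same elementary reduction as in the first case closes the argument. The main obstacle is precisely this bookkeeping in the jump case: the nonsmooth $\max$ together with the nonlinear update $q\mapsto q+\abs{d}^p$ precludes a direct Hessian-based concavity proof, and one has to track carefully how much negative room remains after the increase in $v$.
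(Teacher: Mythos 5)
Your outline captures the broad philosophy of the paper's argument (first-order Taylor in $(x,u)$, absorb the residual into the other two summands, treat $u+e\leq v$ and $u+e>v$ separately), but there is a concrete gap: the chord estimate
\[
(A')^{1/p}-A^{1/p}\leq \frac{A'-A}{p\,A^{1-1/p}}, \qquad A=\abs{x}^{p}/\CH^{p}+q,\ \ A'=\abs{x+d}^{p}/\CH^{p}+q+\abs{d}^{p},
\]
degenerates when $A$ is small compared with $A'-A$. The most extreme case is $x=0$, $q=0$, where $A=0$ and the chord bound is vacuous, yet the inequality \eqref{eq:concavity0} still has to be verified (it reduces there to $(1/\CH^{p}+1)^{1/p}\leq C-\tilde C\ln 2$ and is true, but not via your estimate). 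More generally, whenever $\abs{d}^{p}$ is comparable to or larger than $q$ the residual $(u+e)\abs{d}^{p}/A^{1-1/p}$ that your bound produces cannot be matched against $-Cv\bigl[(q+\abs{d}^{p})^{1/p}-q^{1/p}\bigr]$, because the latter behaves like $Cv\abs{d}$ rather than $Cv\abs{d}^{p}/q^{1-1/p}$ in that regime. The paper handles this by splitting into $\abs{d}^{p}\leq q/2$ (where all the delicate balancing happens, with the crucial lower bound \eqref{eq:sloppy} keeping denominators comparable to $q$) and $\abs{d}^{p}\geq q/2$ (Case 2, where a cruder monotonicity argument $I'(t)\leq 0$ is used instead of a chord estimate). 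Your plan has no analogue of this second case.

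A second, smaller, point: "a scalar inequality in $\abs{d}$, $e$, and $A$, calibrated to hold precisely for $C=9$ and $\tilde C=4\sqrt2$" buries the actual engine of the proof. The cross term of size $\abs{e}\abs{d}$ coming from the Taylor expansion has to be split by AMGM against the two available negative quantities, as in \eqref{eq:AMGM-ed}: one of order $v\abs{d}^{p}/q^{1-1/p}$ from the second summand and one of order $e^{2}q^{1/p}/v$ from the concavity of the logarithm. The exponents in that AMGM step are exactly what produces the conditions \eqref{eq:cond1}, \eqref{eq:cond2}, and hence the factor $1/p'$ in $\tilde C$; without spelling this out, the claim that the constants work is unsubstantiated. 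Your handling of $u+e>v$ (combining the second and third terms into $(\tilde C\ln 2-C)(u+e)(q+\abs{d}^{p})^{1/p}$) is correct as far as it goes, and differs only superficially from the paper's continuation argument $J'(e)\leq 0$ for $e\geq v-u$; but it still relies on the previous case being fully established, which, as above, it is not.
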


Before turning to the verification of \ref{eq:concavity0}, let us quickly show why it is useful.

\begin{proof}[Proof of \eqref{eq:w-non-maximal} assuming Proposition~\ref{prop:concavity0}]
Let $w_{n} := \E(w|\calF_{n})$ and $w^{*}_{n} := \max_{n'\leq n} w_{n'}$.
For each $n$, we apply Proposition~\ref{prop:concavity0} with
\begin{multline}
\label{eq:Bellman-parameters}
x = f_{n}, \quad
q = q_{n} = \abs{f_{0}}^{p} + \sum_{m=1}^{n} \abs{f_{n}-f_{n-1}}^{p}, \\
u = w_{n}, \quad
v = w^{*}_{n}, \quad
d = f_{n+1}-f_{n}, \quad
e = w_{n+1}-w_{n}.
\end{multline}
Taking the conditional expectation on both sides of the resulting inequality, we obtain
\[
\E U(f_{n+1},q_{n+1},w_{n+1},w^{*}_{n+1})
\leq
\E U(f_{n},q_{n},w_{n},w^{*}_{n}).
\]
Iterating this inequality, we obtain
\[
\E (w_{N} \abs{f_{N}}/\CH - C w^{*}_{N} q_{N}^{1/p})
\leq
\E U(f_{N},q_{N},w_{N},w^{*}_{N})
\leq
\E U(f_{0},q_{0},w_{0},w^{*}_{0})
\leq 0.
\]
This implies \eqref{eq:w-non-maximal}.
\end{proof}

Unlike in the scalar case in \cite{MR3688518}, it does not seem possible to directly use the Bellman function \eqref{eq:Bellman-4} to deduce the maximal estimate \eqref{eq:w-BDG}.
However, Proposition~\ref{prop:concavity0} will be used in the proof of Proposition~\ref{prop:concavity}, which will in turn imply the maximal estimate.

We did not attempt to optimize the numerical values of $C,\tilde{C}$ in Proposition~\ref{prop:concavity0}.
Also the conditions \eqref{eq:cond0-tC} and \eqref{eq:cond0-C}, according to which these values are chosen, can be improved by a more careful choice of numerical constants at various places in the proof.
However, we should like to point out that the main loss compared to \cite{MR3688518} is due to the use of the estimate \eqref{eq:sloppy} in several denominators.

\begin{proof}[Proof of Proposition~\ref{prop:concavity0}]
The inequality \eqref{eq:concavity0} is quite delicate for small values of $d$ and $e$, and quite sloppy for large values.
This can be seen by looking at the asymptotic behavior of \eqref{eq:concavity0} for $d\to\infty$ or $e\to\infty$, which is dominated by the term $-C((u+e)\vee v)(q+\abs{d}^{p})^{1/p}$.
Accordingly, we distinguish the following cases.
\begin{enumerate}
\item $\abs{d}^{p} \leq q/2$ and $u+e \leq v$,
\item $\abs{d}^{p} \geq q/2$, $u+e\leq v$,
\item $u+e \geq v$.
\end{enumerate}

Throughout the proof, let
\begin{equation}
\label{eq:def:psi}
\psi(t) := \abs{x+td}^{p}/\CH^{p},
\quad a:=\psi(0),
\quad b:=\psi(0).
\end{equation}
As a consequence of Lemma~\ref{lem:Frechet-dif} and \eqref{eq:phi'-Holder}, we have
\begin{equation}
\label{eq:psi':size}
\abs{\psi'(t)}
\leq
p \abs{x+td}^{p-1} \abs{d}/\CH^{p}
=
p (\psi(t))^{1-1/p} \abs{d}/\CH
\leq
p (\psi(t))^{1-1/p} \abs{d},
\end{equation}
\begin{equation}
\label{eq:psi':H}
\abs{\psi'(t)-\psi'(\tilde{t})} \leq \abs{t-\tilde{t}}^{p-1} \abs{d}^{p}.
\end{equation}

\textbf{Case 1.}
Suppose that $u+e \leq v$ and $\abs{d}^{p} \leq q/2$.

We have
\begin{equation}
\label{eq:psi-linear-approx}
\abs{\psi(t) - \psi(0) - t\psi'(0)}
\leq
\int_{0}^{t} \abs{\psi'(\tilde{t})-\psi'(0)} \dif\tilde{t}
\leq
\int_{0}^{t} \tilde{t}^{p-1} \abs{d}^{p} \dif\tilde{t}
=
\frac{1}{p} \abs{td}^{p}.
\end{equation}
By the AMGM inequality, we have
\begin{align*}
\abs{\psi'(0)}
&\leq
p \psi(0)^{1-1/p} \abs{d}
\leq
\frac{1}{2} \psi(0) + p^{p-1} \abs{d}^{p}
\leq
\frac{1}{2} \psi(0) + q,
\\
\abs{\psi'(0)}
&\leq
p \psi(0)^{1-1/p} \abs{d}
\leq
\psi(0) + \abs{d}^{p}
\leq
\psi(0) + q/2.
\end{align*}
This implies in particular that, for any $t\in [0,1]$, we have
\begin{equation}
\label{eq:sloppy}
\psi(0) + \psi'(0)t + q \geq \max(\psi(0)/2,q/2).
\end{equation}

Let
\begin{align*}
G(t) &:=
U(x+td,q+\abs{td}^{p},u+te,v)
\\ &=
(u+te)(\psi(t)+q+\abs{td}^{p})^{1/p} - v (q+\abs{td}^{p})^{1/p} (C - \tilde{C} \ln(1+(u+te)/v)).
\end{align*}
The claim is then equivalent to $G(1) \leq G(0) + G'(0)$.
Let also
\[
H(t) := (u+te) (\psi(0)+\psi'(0)t+q)^{1/p}
- C_{5} v (q+\abs{td}^{p})^{1/p}
- vq^{1/p} (C_{6} - \tilde{C} \ln(1+(u+te)/v)),
\]
where the splitting $C=C_{5}+C_{6}$ will be chosen later.
Then $H(0) = G(0)$ and $H'(0) = G'(0)$.
In view of \eqref{eq:psi-linear-approx}, we have
\begin{equation}
\label{eq:1}
\psi(1)+q+\abs{d}^{p}
\leq
\psi(0) + \psi'(0) + \frac{1}{p} \abs{d}^{p} + q + \abs{d}^{p},
\end{equation}
and it follows that
\begin{multline*}
G(1) - H(1)
=
(u+e)(\psi(1)+q+\abs{d}^{p})^{1/p} - v (q+\abs{d}^{p})^{1/p} (C - \tilde{C} \ln(1+(u+e)/v))
\\ -
\Bigl( (u+e) (\psi(0)+\psi'(0)+q)^{1/p}
- C_{5} v (q+\abs{d}^{p})^{1/p}
- vq^{1/p} (C_{6} - \tilde{C} \ln(1+(u+e)/v)) \Bigr)
\\ \leq
(u+e)(\psi(0)+\psi'(0)+q+(1+1/p)\abs{d}^{p})^{1/p} - v (q+\abs{d}^{p})^{1/p} (C_{6} - \tilde{C} \ln(1+(u+e)/v))
\\ -
\Bigl( (u+e) (\psi(0)+\psi'(0)+q)^{1/p}
- vq^{1/p} (C_{6} - \tilde{C} \ln(1+(u+e)/v)) \Bigr)
\\ = K(\abs{d}^{p}) - K(0),
\end{multline*}
where
\[
K(s) = (u+e)(\psi(0)+\psi'(0)+q+(1+1/p)s)^{1/p}
- v(q+s)^{1/p} (C_{6} - \tilde{C} \ln(1+(u+e)/v)).
\]
We have
\begin{align*}
K'(s)
&=
\frac{(u+e)(1+1/p)}{p(\psi(0)+\psi'(0)+q+(1+1/p)s)^{1-1/p}}
\\ &\quad
- \frac{v}{p(q+s)^{1-1/p}} (C_{6} - \tilde{C} \ln(1+(u+e)/v))
\\ &\leq
\frac{v(1+1/p)}{p(q/2+s/2)^{1-1/p}} - \frac{v}{p(q+s)^{1-1/p}} (C_{6} - \tilde{C} \ln(2))
\leq 0
\end{align*}
provided that
\begin{equation}
\label{eq:cond1}
C_{6} \geq 2^{1-1/p}(1+1/p) + \tilde{C} \ln(2).
\end{equation}

Next, to show that $H(1) \leq H(0) + H'(0)$, we show that $H'(t) \leq H'(0)$ for $t\in [0,1]$.
We compute
\begin{multline*}
H'(t) = e(a+bt+q)^{1/p} + \frac{(u+te) b}{p(a+bt+q)^{1-1/p}}
\\- \frac{C_{5} v t^{p-1} \abs{d}^{p}}{(q+\abs{td}^{p})^{1-1/p}}
+ e q^{1/p} \tilde{C} /(1+(u+te)/v),
\end{multline*}
\begin{multline*}
H''(t) = \frac{2 e b}{p(a+bt+q)^{1-1/p}}
- \frac{(1-1/p)(u+te) b^{2}}{p(a+bt+q)^{2-1/p}}
\\- \frac{(p-1) C_{5} v t^{p-2} \abs{d}^{p}}{(q+\abs{td}^{p})^{1-1/p}}
+ \frac{p(1-1/p) C_{5} v t^{2p-2} \abs{d}^{2p}}{(q+\abs{td}^{p})^{2-1/p}}
- e^{2} q^{1/p} \tilde{C} /(1+(u+te)/v)^{2}/v
\\ \leq
\frac{2 \abs{e} a^{1-1/p} \abs{d}}{(a+bt+q)^{1-1/p}}
- \frac{(p-1) C_{5} v t^{p-2} \abs{d}^{p} q}{(q+\abs{td}^{p})^{2-1/p}}
- e^{2} q^{1/p} \tilde{C} /(4v)
\\ \leq
2^{2-1/p} \abs{e} \abs{d}
- \frac{(p-1)C_{5} v t^{p-2} \abs{d}^{p}}{(3/2)^{2-1/p} q^{1-1/p}}
- e^{2} q^{1/p} \tilde{C} /(4v).
\end{multline*}
Integrating this inequality, we obtain
\[
H'(t)-H'(0)
=
\int_{0}^{t} H''(\tilde{t}) \dif \tilde{t}
\leq
2^{2-1/p} \abs{e} \abs{d} t
- \frac{C_{5} v t^{p-1} \abs{d}^{p}}{(3/2)^{2-1/p} q^{1-1/p}}
- e^{2} q^{1/p} \tilde{C} /(4v) t.
\]
By the AMGM inequality,
\begin{equation}
\label{eq:AMGM-ed}
\begin{split}
\abs{e} \abs{d}
&=
\bigl( \frac{v \abs{d}^{p}}{q^{1-1/p}} \bigr)^{1/p}
\bigl( \frac{\abs{e}^{p'} q^{1/p}}{v^{p'/p}} \bigr)^{1-1/p}
\\ &\leq
\bigl( \frac{v \abs{d}^{p}}{q^{1-1/p}} \bigr)^{1/p}
\bigl( \frac{\abs{e}^{2} q^{1/p}}{v} \bigr)^{1-1/p}
\\ &\leq
\frac{1}{p} \frac{v \abs{d}^{p}}{q^{1-1/p}}
+ \frac{1}{p'} \frac{\abs{e}^{2} q^{1/p}}{v}
\end{split}
\end{equation}
Hence, $H'(t) \leq H'(0)$ provided that
\begin{equation}
\label{eq:cond2}
C_{5} \geq 3^{2-1/p}/p,
\quad
\tilde{C} \geq 2^{4-1/p}/p'.
\end{equation}

\textbf{Case 2.}
Suppose now $\abs{d}^{p} \geq q/2$, $u+e\leq v$.
Let
\[
I(t) := U(x+td,q+\abs{td}^{p},u+e,v) - U(x,q,u,v) - U_{x}(x,q,u,v) td - U_{u}(x,q,u,v)e.
\]
For $\abs{td}^{p} \leq q/2$, we showed $I(t) \leq 0$ in the previous step.
Hence, it suffices to show $I'(t) \leq 0$ for all $t\in [0,1]$ such that $\abs{td}^{p} \geq q/2$.
We have
\begin{multline*}
I'(t)
= \frac{(u+e)(\psi'(t)+pt^{p-1}\abs{d}^{p})}{p(\psi(t) + q + \abs{td}^{p})^{1-1/p}}
\\- \frac{v t^{p-1}\abs{d}^{p}}{(q+\abs{td}^{p})^{1-1/p}} (C-\tilde{C} \ln(1+(u+e)/v))
- \frac{u \psi'(0)}{p(\psi(0)+q)^{1-1/p}}
\\ \leq
\frac{v \abs{d} (\psi(t)^{1-1/p} + \abs{td}^{p-1})}{(\psi(t) + q + \abs{td}^{p})^{1-1/p}}
- \frac{v t^{p-1}\abs{d}^{p}}{(q+\abs{td}^{p})^{1-1/p}} (C-\tilde{C} \ln(2))
+ \frac{v \psi(0)^{1-1/p} \abs{d}}{(\psi(0)+q)^{1-1/p}}
\\ \leq
2^{1/p} v \abs{d} - \frac{v \abs{d}}{3^{1-1/p}} (C-\tilde{C}\ln 2) + v \abs{d} \leq 0
\end{multline*}
provided that
\begin{equation}
\label{eq:cond3}
C \geq 3^{1-1/p} (1+2^{1/p}) + \tilde{C} \ln 2.
\end{equation}

\textbf{Case 3.}
Suppose now that $u+e \geq v$.
We want to show
\[
J(e) := U(x+d,q+\abs{d}^{p},u+e,u+e) - U(x,q,u,v) - U_{x}(x,q,u,v) d - U_{u}(x,q,u,v)e \leq 0.
\]
For $u+e = v$, we have shown that $J(e) \leq 0$ in the previous steps.
Hence, it suffices to show $J'(e) \leq 0$ for $e\geq v-u$.
We have
\begin{multline*}
J'(e)
= (\abs{x+d}^{p}/\CH^{p}+q+\abs{d}^{p})^{1/p} - (q+\abs{d}^{p})^{1/p} (C-\tilde{C}\ln 2)
\\- (\abs{x}^{p}/\CH^{p}+q)^{1/p} - q^{1/p} \tilde{C}/(1+u/v)
\\ \leq
\abs{x+d}/\CH + q^{1/p} + \abs{d}
- (q+\abs{d}^{p})^{1/p} (C-\tilde{C}\ln 2) - \abs{x}/\CH - q^{1/p} \tilde{C}/2
\\ \leq
2\abs{d}
- \abs{d} (C-\tilde{C}\ln 2) - q^{1/p} (\tilde{C}/2-1)
\leq 0
\end{multline*}
provided that
\begin{equation}
\label{eq:cond4}
\tilde{C} \geq 2,
\quad
C \geq 2 + \tilde{C}\ln 2.
\end{equation}
The inequalities \eqref{eq:cond1}, \eqref{eq:cond2}, \eqref{eq:cond3}, \eqref{eq:cond4} can be summarized as
\begin{align}
\label{eq:cond0-tC}
\tilde{C} &\geq \max(2,2^{4-1/p}/p'),
\\ \label{eq:cond0-C}
C &\geq \max(2,3^{1-1/p} (1+2^{1/p}),3^{2-1/p}/p+2^{1-1/p}(1+1/p)) + \tilde{C} \ln 2.
\end{align}
Plotting these functions, we see that the inequalities are satisfied with the claimed values of $C,\tilde{C}$.
\end{proof}

\section{Bellman function for the maximal function}
\label{sec:Bellman-5}
In this section, we combine the Bellman functions from \cite{MR3688518} and \cite{MR3322322}.
For $x\in X$, $\abs{x} \leq m$, $q\geq 0$, and $0 \leq u \leq v$, let
\begin{equation}
\label{eq:Bellman-fct}
\begin{split}
\MoveEqLeft
U(x,m,q,u,v)
\\ &:=
u (m^{p}/\CH^{p}+q)^{1/p}-\frac{u}{p}\frac{m^{p}/\CH^{p}-\abs{x}^{p}/\CH^{p}}{(m^{p}/\CH^{p}+q)^{1-1/p}}
-Cvq^{1/p} + \tilde{C} vq^{1/p} \ln(1+u/v)
\\ &=
\frac{u}{p'} (m^{p}/\CH^{p}+q)^{1/p} + \frac{u}{p}\frac{q+\abs{x}^{p}/\CH^{p}}{(m^{p}/\CH^{p}+q)^{1-1/p}}
-Cvq^{1/p}+ \tilde{C} vq^{1/p} \ln(1+u/v)
\end{split}
\end{equation}

Evidently, the function \eqref{eq:Bellman-fct} is a modification of \eqref{eq:Bellman-4}.
The most obvious such modification would be to replace $\abs{x}$ by $m$; the more sophisticated modification in \eqref{eq:Bellman-fct} is chosen in such a way that the left-hand side of \eqref{eq:concavity} becomes differentiable in $d$.
The following concavity property is the main feature of the function \eqref{eq:Bellman-fct}.

\begin{proposition}
\label{prop:concavity}
Let $p\in (1,2]$, $C=21$, and $\tilde{C}=4\sqrt{2}$.
Then, for any $x,d\in X$, $m,q,u,v\in \R_{\geq 0}$, and $e\in\R$ with $|x| \leq m$, $u\leq v$, and $u+e \geq 0$, we have
\begin{multline}
\label{eq:concavity}
U\bigl(x+d, m \vee|x+d| , q+\abs{d}^{p}, u+e, (u+e) \vee v \bigr)
\\ \leq
U(x,m,q,u,v)
+
U_{x}(x,m,q,u,v)d
+
U_{u}(x,m,q,u,v)e.
\end{multline}
\end{proposition}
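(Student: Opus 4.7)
The plan is a two-step reduction: first, a monotonicity argument in $m$ to eliminate the $m\vee\abs{x+d}$ on the left-hand side of \eqref{eq:concavity}; then, a three-case analysis at fixed $m$ that parallels the proof of Proposition~\ref{prop:concavity0}.

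\textbf{Monotonicity in $m$.} I first extend the formula \eqref{eq:Bellman-fct} to all $m\geq 0$ (the formula is well-defined without the restriction $\abs{x}\leq m$) and compute directly
\[
\partial_{m} U(x,m,q,u,v)
=
\frac{u\,(m^{p}-\abs{x}^{p})\,m^{p-1}}{p'\,\CH^{2p}\,(m^{p}/\CH^{p}+q)^{1/p'+1}},
\]
which is $\leq 0$ on $m\leq\abs{x}$. When $\abs{x+d}>m$, the function $m''\mapsto U(x+d,m'',q+\abs{d}^{p},u+e,(u+e)\vee v)$ is therefore non-increasing on $[m,\abs{x+d}]$, so
\[
U(x+d,\,m\vee\abs{x+d},\,q+\abs{d}^{p},u+e,(u+e)\vee v)\leq U(x+d,\,m,\,q+\abs{d}^{p},u+e,(u+e)\vee v),
\]
with equality when $\abs{x+d}\leq m$. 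It therefore suffices to prove \eqref{eq:concavity} with $m\vee\abs{x+d}$ replaced by $m$, i.e.\ with $m$ fixed on both sides.

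\textbf{Fixed-$m$ Bellman estimate.} Put $\tilde{U}(x,q,u,v):=U(x,m,q,u,v)$. The decisive structural feature of \eqref{eq:Bellman-fct} is that $\tilde{U}$ is \emph{affine} in $\phi(x)=\abs{x}^{p}$ at fixed $(q,u,v)$; by Section~\ref{sec:unif-smooth}, $\tilde{U}$ is therefore Fr\'echet differentiable in $x$, and the $d$-nonlinearity coming from $\phi$ is controlled by \eqref{eq:phi'-Holder}. Crucially, with $m$ held fixed, the Taylor error in $\phi$ never enters the denominators $(m^{p}/\CH^{p}+q)^{1/p'}$ of $\tilde{U}$, in contrast to Proposition~\ref{prop:concavity0} where the analogous denominators involved $\psi(t)=\abs{x+td}^{p}/\CH^{p}$. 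I would then run the tripartite case analysis of the proof of Proposition~\ref{prop:concavity0}: (i) $\abs{d}^{p}\leq q/2$ and $u+e\leq v$; (ii) $\abs{d}^{p}\geq q/2$ and $u+e\leq v$; (iii) $u+e\geq v$. In case~(i), I parametrize $G(t):=\tilde{U}(x+td,q+\abs{td}^{p},u+te,v)$, replace $\phi(x+td)$ in the numerator of the second $\tilde{U}$-term by its first-order Taylor polynomial at a cost of $\leq\abs{td}^{p}/p$ (cf.~\eqref{eq:psi-linear-approx}), and reduce to showing $H'(t)\leq 0$ for a linearized auxiliary $H$; the AM-GM trick \eqref{eq:AMGM-ed} reappears verbatim to absorb the cross term $\abs{e}\abs{d}$. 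Cases~(ii) and~(iii) are dispatched by direct inspection of the $t$- and $e$-derivatives, along the lines of Cases~2 and~3 of the proof of Proposition~\ref{prop:concavity0}.

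\textbf{Main obstacle.} The principal new difficulty is that $(m^{p}/\CH^{p}+q)^{1/p'}$, which replaces $(\abs{x}^{p}/\CH^{p}+q)^{1/p'}$ in the denominators of $\tilde{U}$, is \emph{larger} than the corresponding denominators in Proposition~\ref{prop:concavity0}. The sloppy lower bound \eqref{eq:sloppy} weakens correspondingly: one still retains the $q$-part, but loses the $\psi(0)$-part. This is what forces the larger constant $C=21$ in place of $C=9$; I expect the resulting conditions on $(C,\tilde{C})$ to be analogous to \eqref{eq:cond0-tC}--\eqref{eq:cond0-C} with suitably enlarged coefficients. The delicate book-keeping will be in case~(i), where the split $\tilde{U}=\frac{u}{p'}(m^{p}/\CH^{p}+q)^{1/p}+\frac{u(q+\abs{x}^{p}/\CH^{p})}{p(m^{p}/\CH^{p}+q)^{1/p'}}+\cdots$ has to be tracked carefully through the derivative computations in order to verify inequalities analogous to \eqref{eq:cond1}--\eqref{eq:cond4} with the stated values of $C,\tilde{C}$.
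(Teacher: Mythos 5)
Your monotonicity reduction is a genuinely different route from the paper's. The paper handles $\abs{x+d}>m$ (its Case~2) by observing that $U(x+d,\abs{x+d},\cdot)$ collapses to the non-maximal Bellman function of \eqref{eq:Bellman-4} and then invoking Proposition~\ref{prop:concavity0} with $C_{3}=9$, followed by an auxiliary estimate to compare the two $m$-dependent terms with $(\abs{x}^{p}/\CH^{p}+q)^{1/p}$ (this is where the elementary inequality $(a+b)^{p}\leq a^{p}+pa^{p-1}b+b^{p}$ and the condition \eqref{eq:cond10} come in). Your approach replaces all of that with a one-line $\partial_{m}$-computation (which I checked and is correct) and then feeds everything through the fixed-$m$ cases. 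If the constants close, this is simpler and avoids using Proposition~\ref{prop:concavity0} as a black box.

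There is, however, a concrete gap in the "Fixed-$m$ Bellman estimate" step, precisely in your case~(ii). The paper's Case~1c bound
\[
\frac{(1+1/p')\,t^{p-1}\abs{d}^{p}+\psi(t)^{1/p'}\abs{d}}{(m^{p}/\CH^{p}+q+\abs{td}^{p})^{1-1/p}}
\leq
\abs{d}\,\bigl((1+1/p')^{p}+1\bigr)^{1/p}
\]
is proved by optimizing $(1+1/p')s^{1/p'}+(1-s)^{1/p'}$ over $s\in[0,1]$, which requires $\psi(t)\leq m^{p}/\CH^{p}$; that follows from $\abs{x+td}\leq m$, which is the paper's Case~1 hypothesis but is \emph{not} available after your reduction. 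You would need to substitute a weaker estimate such as $\abs{x+td}\leq m+\abs{td}$, hence $\psi(t)\leq 2(m^{p}/\CH^{p}+\abs{td}^{p})$ via $(a+b)^{p}\leq 2^{p-1}(a^{p}+b^{p})$, giving $\psi(t)^{1/p'}/(m^{p}/\CH^{p}+q+\abs{td}^{p})^{1/p'}\leq 2^{1/p'}$ and the separated bound $(1+1/p'+2^{1/p'})\abs{d}$. This changes the analogue of \eqref{eq:cond8} to $C\geq 3^{1-1/p}(2+1/p'+2^{1/p'})+\tilde{C}\ln 2$, which is still comfortably below $21$, but you did not verify this, and the analogous checks in your cases~(i) and~(iii) that the paper's manipulations never actually use $\abs{x+td}\leq m$ (only $\abs{x}\leq m$) are also left unaddressed. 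Finally, your "Main obstacle" paragraph has the diagnosis backwards: the denominator $(m^{p}/\CH^{p}+q)^{1/p'}$ is \emph{fixed}, so the sloppy bound \eqref{eq:sloppy} is not needed at all here, and the larger constant $C=21$ in the paper is driven by its Case~2a (the $m$-increase case) rather than by any weakening of \eqref{eq:sloppy}; if your reduction closes, the constant would plausibly come out \emph{smaller} than $21$, not larger.
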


The numerical value of $C$, which comes out of the condition \eqref{eq:cond-C}, is again probably far from optimal.

\begin{proof}[Proof of Theorem~\ref{thm:w-BDG} assuming Proposition~\ref{prop:concavity}]
We apply Proposition~\ref{prop:concavity} with the same parameters as in \eqref{eq:Bellman-parameters}, and additionally
\[
m = f^{*}_{n} := \max_{n'\leq n} \abs{f_{n'}}.
\]
Taking the conditional expectation on both sides of the resulting inequality, we obtain
\[
\E U(f_{n+1},f^{*}_{n+1},q_{n+1},w_{n+1},w^{*}_{n+1})
\leq
\E U(f_{n},f^{*}_{n},q_{n},w_{n},w^{*}_{n}).
\]
Iterating this inequality, we obtain
\[
\E \bigl( \frac{w_{N} f^{*}_{N}}{p' \CH} - C w^{*}_{N} q_{N}^{1/p})
\leq
\E U(f_{N},f^{*}_{N},q_{N},w_{N},w^{*}_{N})
\leq
\E U(f_{0},f^{*}_{0},q_{0},w_{0},w^{*}_{0})
\leq 0.
\]
This implies
\begin{equation}
\label{eq:w-BDG:CH}
\E (f^{*} w)
\leq
21p' \CH \E (S_{p}f \cdot w^{*}),
\end{equation}
which in turn implies \eqref{eq:w-BDG} in view of \eqref{eq:CH<Csm}.

A similar argument also shows \eqref{eq:w-BDG:non-martingale}.
\end{proof}

\begin{remark}
Proposition~\ref{prop:concavity} can also be used to recover a non-maximal bound similar to \eqref{eq:w-non-maximal} (but with a larger absolute constant).
This is because, by the AMGM inequality,
\[
\frac{w_{N} \abs{f_{N}}}{\CH}
\leq
w_{N} ( (\abs{f_{N}}/\CH)^{p} + q_{N})^{1/p}
\leq
\frac{w_{N}}{p'} ((f^{*}_{N}/\CH)^{p}+q_{N})^{1/p} + \frac{w_{N}}{p}\frac{q_{N}+\abs{f_{N}}^{p}/\CH^{p}}{((f^{*}_{N}/\CH)^{p}+q_{N})^{1-1/p}}.
\]
\end{remark}

\begin{proof}[Proof of Proposition~\ref{prop:concavity}]
Due to an additional maximum in \eqref{eq:concavity}, we have to distinguish a few more cases than in Section~\ref{sec:Bellman-4}.
The main distinction is according to the ordering of $\abs{x+d}$ and $m$, since this ordering substantially affects the shape of the function \eqref{eq:Bellman-fct}.
The cases are as follows.
\begin{enumerate}
\item $\abs{x+d} \leq m$
\begin{enumerate}
\item $\abs{d}^{p} \leq q/2$, $u+e\leq v$,
\item $\abs{d}^{p} \leq q/2$, $u+e\geq v$,
\item $\abs{d}^{p} \geq q/2$.
\end{enumerate}
\item $\abs{x+d} \geq m$
\begin{enumerate}
\item $\abs{d}^{p} \leq q/2$,
\item $\abs{d}^{p} \geq q/2$.
\end{enumerate}
\end{enumerate}
Similarly as in Proposition~\ref{prop:concavity0}, only the cases 1a and 2a are delicate.

We continue to use the notation \eqref{eq:def:psi} and the estimates \eqref{eq:psi':size}, \eqref{eq:psi':H}.

\textbf{Case 1.}
First, we consider the case $\abs{x+d} \leq m$.

\textbf{Case 1a.}
We consider the subcase $u+e \leq v$, $\abs{d}^{p} \leq q/2$.

Let
\begin{align*}
G(t) &:= U(x+td,m,q+\abs{td}^{p},u+te,v)
\\ &=
\frac{u+te}{p'} (m^{p}/\CH^{p}+q+\abs{td}^{p})^{1/p} + \frac{u+te}{p} \frac{q+\abs{td}^{p}+\abs{x+td}^{p}/\CH^{p}}{(m^{p}/\CH^{p}+q+\abs{td}^{p})^{1-1/p}}
\\ &\quad-
v(q+\abs{td}^{p})^{1/p}(C-\tilde{C} \ln(1+(u+te)/v)).
\end{align*}
With this notation, the claim \eqref{eq:concavity} turns into $G(1) \leq G(0) + G'(0)$.

With a splitting $C=C_{1}+C_{2}$ to be chosen later, let
\begin{align*}
H(t) &:= 
\frac{u+te}{p'} (m^{p}/\CH^{p}+q)^{1/p} + \frac{u+te}{p} \frac{q+\psi(0)+\psi'(0)t}{(m^{p}/\CH^{p}+q)^{1-1/p}}
\\ &\quad-
C_{1}v(q+\abs{td}^{p})^{1/p} - v q^{1/p} (C_{2}-\tilde{C} \ln(1+(u+te)/v)).
\end{align*}
Then $G(0)=H(0)$ and $G'(0)=H'(0)$.
By \eqref{eq:1}, we have
\begin{multline*}
G(1)-H(1)
=
\frac{u+e}{p'} (m^{p}/\CH^{p}+q+\abs{d}^{p})^{1/p} + \frac{u+e}{p} \frac{q+\abs{d}^{p}+\abs{x+d}^{p}/\CH^{p}}{(m^{p}/\CH^{p}+q+\abs{d}^{p})^{1-1/p}}
\\ -
v(q+\abs{d}^{p})^{1/p}(C-\tilde{C} \ln(1+(u+e)/v))
-
\Bigl( \frac{u+e}{p'} (m^{p}/\CH^{p}+q)^{1/p} + \frac{u+e}{p} \frac{q+\psi(0)+\psi'(0)}{(m^{p}/\CH^{p}+q)^{1-1/p}}
\\ -
C_{1}v(q+\abs{d}^{p})^{1/p} - v q^{1/p} (C_{2}-\tilde{C} \ln(1+(u+e)/v)) \Bigr)
\\ \leq
\frac{u+e}{p'} (m^{p}/\CH^{p}+q+\abs{d}^{p})^{1/p} + \frac{u+e}{p} \frac{q+\abs{d}^{p}+\psi(0)+\psi'(0)+\abs{d}^{p}/p}{(m^{p}/\CH^{p}+q+\abs{d}^{p})^{1-1/p}}
\\ -
v(q+\abs{d}^{p})^{1/p}(C_{2}-\tilde{C} \ln(1+(u+e)/v))
-
\Bigl( \frac{u+e}{p'} (m^{p}/\CH^{p}+q)^{1/p} + \frac{u+e}{p} \frac{q+\psi(0)+\psi'(0)}{(m^{p}/\CH^{p}+q)^{1-1/p}}
\\ - v q^{1/p} (C_{2}-\tilde{C} \ln(1+(u+e)/v)) \Bigr)
=
K(\abs{d}^{p}) - K(0),
\end{multline*}
where
\begin{multline*}
K(s)
=
\frac{u+e}{p'} (m^{p}/\CH^{p}+q+s)^{1/p} + \frac{u+e}{p} \frac{q+s+a+b+s/p}{(m^{p}/\CH^{p}+q+s)^{1-1/p}}
\\-
v (q+s)^{1/p}(C_{2}-\tilde{C} \ln(1+(u+e)/v)).
\end{multline*}
In order to show that $G(1) \leq H(1)$, we compute
\begin{align*}
K'(s)
&=
\frac{u+e}{p'} \frac{1}{p(m^{p}/\CH^{p}+q+s)^{1-1/p}}
\\ &\quad+
\frac{u+e}{p} \Bigl( \frac{1+1/p}{(m^{p}/\CH^{p}+q+s)^{1-1/p}}
-
\frac{(1-1/p)(q+s+a+b+s/p)}{(m^{p}/\CH^{p}+q+s)^{2-1/p}} \Bigr)
\\ &\quad-
\frac{v}{p(q+s)^{1/p}}(C_{2}-\tilde{C} \ln(1+(u+e)/v))
\\ &\leq
(u+e) \frac{2/p}{(m^{p}/\CH^{p}+q+s)^{1-1/p}}
- \frac{v}{p(q+s)^{1/p}}(C_{2}-\tilde{C} \ln(2)).
\\ &\leq
\frac{2v}{p} \frac{1}{(q+s)^{1/p}}
- \frac{v}{p(q+s)^{1/p}}(C_{2}-\tilde{C} \ln(2))
\leq 0
\end{align*}
provided that
\begin{equation}
\label{eq:cond5}
C_{2}\geq 2 + \tilde{C} \ln(2).
\end{equation}

Now, it suffices to show $H(1) \leq H(0)+H'(0)$.
This will follow from $H'(t) \leq H'(0)$ for $t\in [0,1]$.
Compute
\begin{align*}
H'(t) &=
\frac{e}{p'} (m^{p}/\CH^{p}+q)^{1/p} + \frac{e}{p} \frac{q+a+bt}{(m^{p}/\CH^{p}+q)^{1-1/p}}
+
\frac{u+te}{p}\frac{b}{(m^{p}/\CH^{p}+q)^{1-1/p}}
\\ &\quad-
\frac{t^{p-1}\abs{d}^{p} C_{1}v}{(q+\abs{td}^{p})^{1/p}} + eq^{1/p} \tilde{C}/(1+(u+te)/v),
\end{align*}
and
\begin{align*}
H''(t) &=
\frac{2e}{p} \frac{b}{(m^{p}/\CH^{p}+q)^{1-1/p}}
\\ &\quad-
\frac{(p-1)t^{p-2}\abs{d}^{p} C_{1}v}{(q+\abs{td}^{p})^{1-1/p}}
+
\frac{(1-1/p)pt^{p-1}\abs{d}^{p} t^{p-1}\abs{d}^{p} C_{1}v}{(q+\abs{td}^{p})^{2-1/p}}
\\ &\quad- \frac{e^{2}}{v} q^{1/p}\tilde{C}/(1+(u+te)/v)^{2}
\\ &\leq
2\abs{e} \frac{a^{1/p'} \abs{d}}{(m^{p}/\CH^{p}+q)^{1-1/p}}
\\ &\quad-
\frac{(p-1) t^{p-2} \abs{d}^{p} C_{1}v}{(q+\abs{td}^{p})^{1-1/p}} \Bigl( 1 - \frac{\abs{td}^{p}}{q+\abs{td}^{p}} \Bigr)
- \frac{e^{2}}{v} q^{1/p}\tilde{C}/4
\\ &\leq
2 \abs{e} \abs{d}
-
\frac{(p-1) t^{p-2} \abs{d}^{p} C_{1}v}{(3/2)^{2-1/p} q^{1-1/p}}
- \frac{e^{2}}{v} q^{1/p}\tilde{C}/4.
\end{align*}
Integrating this inequality, we obtain
\[
H'(t) - H'(0)
\leq
2 \abs{e} \abs{d} t
- \frac{t^{p-1} \abs{d}^{p} C_{1}v}{(3/2)^{2-1/p} q^{1-1/p}}
- \frac{e^{2} q^{1/p}\tilde{C}}{4v} t.
\]
Recalling \eqref{eq:AMGM-ed}, we see that $H'(t) \leq H'(0)$ provided that
\begin{equation}
\label{eq:cond6}
C_{1} \geq 2 \cdot (3/2)^{2-1/p}/p,
\quad
\tilde{C} \geq 2^{3}/p'.
\end{equation}

\textbf{Case 1b.}
We keep the assumptions $\abs{x+d} \leq m$ and $\abs{d}^{p} \leq q/2$.
Now, we consider the case $u+e\geq v$.
In particular, $e\geq v-u \geq 0$.
Let
\begin{multline*}
J(e) :=
U\bigl(x+d, m , q+\abs{d}^{p}, u+e, u+e \bigr)
\\-
U(x,m,q,u,v)
-
U_{x}(x,m,q,u,v)d
-
U_{u}(x,m,q,u,v)e.
\end{multline*}
For $e = v-u$, we showed that $J(e)\leq 0$ in the previous case.
Hence, it suffices to show that $J'(e) \leq 0$ for $e \geq v-u$.
We have
\begin{align*}
J'(e) &=
\frac{1}{p'} (m^{p}/\CH^{p}+q+\abs{d}^{p})^{1/p}
+ \frac{1}{p} \frac{q+\abs{d}^{p}+\abs{x+d}^{p}/\CH^{p}}{(m^{p}/\CH^{p}+q+\abs{d}^{p})^{1-1/p}}
- (q+\abs{d}^{p})^{1/p} (C-\tilde{C}\ln 2)
\\ &\quad- \frac{1}{p'} (m^{p}/\CH^{p}+q)^{1/p}
- \frac{1}{p} \frac{q+\abs{x}^{p}/\CH^{p}}{(m^{p}/\CH^{p}+q)^{1-1/p}}
- \tilde{C} q^{1/p}/(1+u/v)
\\ &\leq
\frac{\abs{d}}{p'}
+ \frac{1}{p} \frac{\abs{d}^{p}+\abs{x+d}^{p}/\CH^{p}-\abs{x}^{p}/\CH^{p}}{(m^{p}/\CH^{p}+q)^{1-1/p}}
- (q+\abs{d}^{p})^{1/p} (C-\tilde{C}\ln 2).
\end{align*}
Using \eqref{eq:psi-linear-approx}, we obtain
\begin{align*}
J'(e) &\leq
\frac{\abs{d}}{p'}
+ \frac{1}{p} \frac{\abs{d}^{p} + \abs{b} + \abs{d}^{p}/p}{(m^{p}/\CH^{p}+q)^{1-1/p}}
- (q+\abs{d}^{p})^{1/p} (C-\tilde{C}\ln 2)
\\ &\leq
(1/p) \abs{d}
+ \frac{1}{p} \frac{\abs{d}^{p}(1+1/p) + p a^{p/p'} \abs{d}}{(m^{p}/\CH^{p}+q)^{1-1/p}}
- (q+\abs{d}^{p})^{1/p} (C-\tilde{C}\ln 2)
\\ &\leq
(1/p + (1+(1/p+1/p^{2})^{p})^{1/p}) \abs{d}
- (q+\abs{d}^{p})^{1/p} (C-\tilde{C}\ln 2).
\end{align*}
This is $\leq 0$ provided that
\begin{equation}
\label{eq:cond7}
C \geq 1/p + (1+(1/p+1/p^{2})^{p})^{1/p} + \tilde{C}\ln 2.
\end{equation}

\textbf{Case 1c.}
Suppose now that still $\abs{x+d} \leq m$, but now $\abs{d}^{p} \geq q/2$.
Let $\tilde{v} := (u+e) \vee v$.
Let
\begin{multline*}
I(t) := U(x+td,m,q+\abs{td}^{p},u+e,\tilde{v}) \\
- U(x,m,q,u,v) - U_{x}(x,m,q,u,v) td - U_{u}(x,m,q,u,v)e.
\end{multline*}
For $\abs{td}^{p} \leq q/2$, we showed $I(t) \leq 0$ in the previous steps.
Hence, it suffices to show $I'(t) \leq 0$ for $t\in [0,1]$ such that $\abs{td}^{p} \geq q/2$.
We have
\begin{multline*}
I'(t) =
\frac{u+e}{p'} \frac{t^{p-1}\abs{d}^{p}}{(m^{p}/\CH^{p}+q+\abs{td}^{p})^{1-1/p}}
\\ + \frac{u+e}{p} \Bigl( \frac{pt^{p-1}\abs{d}^{p}+\psi'(t)}{(m^{p}/\CH^{p}+q+\abs{td}^{p})^{1-1/p}}
- \frac{(q+\abs{td}^{p}+\abs{x+td}^{p}/\CH^{p}) \cdot (1-1/p)pt^{p-1} \abs{d}^{p}}{(m^{p}/\CH^{p}+q+\abs{td}^{p})^{2-1/p}} \Bigr)
\\-\frac{\tilde{v} t^{p-1} \abs{d}^{p}}{(q+\abs{td}^{p})^{1-1/p}}(C - \tilde{C}\ln(1+(u+e)/\tilde{v}))
- \frac{u \psi'(0)}{p(m^{p}/\CH^{p}+q)^{1-1/p}}.
\end{multline*}
\[
\leq
\tilde{v} \Bigl( \frac{(1+1/p') t^{p-1} \abs{d}^{p} + \psi(t)^{1/p'} \abs{d}}{(m^{p}/\CH^{p}+q+\abs{td}^{p})^{1-1/p}} \Bigr)
-\frac{\tilde{v} t^{p-1} \abs{d}^{p}}{(q+\abs{td}^{p})^{1-1/p}}(C - \tilde{C}\ln2)
+ \frac{\tilde{v} a^{1/p'} \abs{d}}{(m^{p}/\CH^{p}+q)^{1-1/p}}
\]
\[
\leq
\tilde{v} \abs{d} ((1+1/p')^{p}+1)^{1/p}
-\frac{\tilde{v} t}{3^{1-1/p}}(C - \tilde{C}\ln2) + \tilde{v} \abs{d}
\leq 0
\]
provided that
\begin{equation}
\label{eq:cond8}
C \geq 3^{1-1/p}(1+(1+(1/p')^{p})^{1/p}) + \tilde{C} \ln 2.
\end{equation}

\textbf{Case 2.}
Suppose now that $\abs{x+d} > m$.
Let $\tilde{v} := v \vee (u+e)$.
In this case, the claim \eqref{eq:concavity} becomes
\begin{multline}
\label{eq:claim-m-changes}
(u+e)(\abs{x+d}^{p}/\CH^{p}+q+\abs{d}^{p})^{1/p}
-C\tilde{v}(q+\abs{d}^{p})^{1/p} + \tilde{C} \tilde{v} (q+\abs{d}^{p})^{1/p} \ln(1+(u+e)/\tilde{v})
\\ \leq
\frac{u}{p'} (m^{p}/\CH^{p}+q)^{1/p}+ \frac{u}{p} \frac{q+\abs{x}^{p}/\CH^{p}}{(m^{p}/\CH^{p}+q)^{1-1/p}}
-Cv q^{1/p} + \tilde{C} v q^{1/p} \ln(1+u/v)
\\ +
\frac{u\psi'(0)}{p(m^{p}/\CH^{p}+q)^{1-1/p}} + \Bigl(\frac{1}{p'}(m^{p}/\CH^{p}+q)^{1/p} + \frac{1}{p}\frac{q+\abs{x}^{p}/\CH^{p}}{(m^{p}/\CH^{p}+q)^{1-1/p}}
+ \tilde{C} q^{1/p}/(1+u/v)\Bigr) e.
\end{multline}

\textbf{Case 2a.}
Suppose first $\abs{d}^{p} \leq q/2$.
Let a splitting $C=C_{3}+C_{4}$ be chosen later.
By Proposition~\ref{prop:concavity0}, we have
\begin{multline*}
(u+e)(\abs{x+d}^{p}/\CH^{p}+q+\abs{d}^{p})^{1/p}
-C_{3}\tilde{v}(q+\abs{d}^{p})^{1/p}
+ \tilde{C} \tilde{v} (q+\abs{d}^{p})^{1/p} \ln(1+(u+e)/\tilde{v})
\\ \leq
u(\abs{x}^{p}/\CH^{p}+q)^{1/p}
-C_{3} v q^{1/p} + \tilde{C} v q^{1/p} \ln(1+u/v)
\\ +
\frac{u \psi'(0)}{p(\abs{x}^{p}/\CH^{p}+q)^{1/p}} + \Bigl((\abs{x}^{p}/\CH^{p}+q)^{1/p} + \tilde{C} q^{1/p}/(1+u/v)\Bigr) e
\end{multline*}
with 
\begin{equation}
\label{eq:cond9}
C_{3} = 9, \quad \tilde{C} = 4\sqrt{2}.
\end{equation}
Note that this value of $\tilde{C}$ is compatible with \eqref{eq:cond6}.

By the AMGM inequality, we have
\[
(\abs{x}^{p}/\CH^{p}+q)^{1/p}
\leq
\frac{1}{p'} (m^{p}+q)^{1/p} + \frac{1}{p} \frac{q+\abs{x}^{p}/\CH^{p}}{(m^{p}+q)^{1-1/p}}.
\]
Multiplying this inequality by $u+e$ and inserting it on the right-hand side of \eqref{eq:claim-m-changes}, we see that it suffices to show
\[
C_{4}v q^{1/p} - C_{4} \tilde{v} (q+\abs{d}^{p})^{1/p}
\leq
\frac{u \psi'(0)}{p(m^{p}/\CH^{p}+q)^{1-1/p}}
- \frac{u \psi'(0)}{p(\abs{x}^{p}/\CH^{p}+q)^{1-1/p}}.
\]
If $\psi'(0) \leq 0$, then the right-hand side is positive, so there is nothing to show.
Let us assume $\psi'(0)> 0$.
The claim is then equivalent to
\[
\frac{u \psi'(0)}{p} \bigl( \frac{1}{(\abs{x}^{p}/\CH^{p}+q)^{1-1/p}}
- \frac{1}{(m^{p}/\CH^{p}+q)^{1-1/p}} \bigr)
\leq
C_{4} \tilde{v} (q+\abs{d}^{p})^{1/p} - C_{4}v q^{1/p}.
\]
Since $0 \leq u \leq v \leq \tilde{v}$ and $\abs{\psi'(0)} \leq p a^{1/p'}\abs{d}$, it suffices to show
\[
\frac{\abs{x}^{p/p'}\abs{d}}{\CH^{p}} \bigl(\frac{1}{(\abs{x}^{p}/\CH^{p}+q)^{1-1/p}} -\frac{1}{(m^{p}/\CH^{p}+q)^{1-1/p}} \bigr)
\leq
C_{4} (q+\abs{d}^{p})^{1/p} - C_{4} q^{1/p}.
\]
By concavity of $\cdot^{1/p}$ and convexity of $\cdot^{1/p-1}$, this will follow from
\[
\frac{\abs{x}^{p/p'}\abs{d}}{\CH^{p}}
(\abs{x}^{p}/\CH^{p}-m^{p}/\CH^{p}) (1/p-1) (\abs{x}^{p}/\CH^{p}+q)^{1/p-2}
\leq
C_{4} \abs{d}^{p} (1/p) (q+\abs{d}^{p})^{1/p-1}.
\]
Since $\abs{d}^{p} \leq q/2$, this will follow from
\[
\frac{\abs{x}^{p/p'}\abs{d}}{\CH^{2p}}
(p-1) (m^{p}-\abs{x}^{p}) (\abs{x}^{p}/\CH^{p}+q)^{1/p-2}
\leq
C_{4} \abs{d}^{p} (3q/2)^{1/p-1}.
\]
This will follow from
\[
\frac{\abs{d}}{\CH^{p}}
(p-1) (m^{p}-\abs{x}^{p}) (\abs{x}^{p}/\CH^{p}+q)^{-1}
\leq
C_{4} \abs{d}^{p} (3q/2)^{1/p-1}.
\]
Since $\abs{x} \leq m \leq \abs{x+d} \leq \abs{x} + \abs{d}$ and using the elementary inequality $(a+b)^{p} \leq a+pa^{p-1}b+b^{p}$ (which holds for any $a,b\geq 0$ and $p \in [1,2]$, as can be seen by differentiating both sides in $b$), we have
\begin{multline*}
m^{p}/\CH^{p}+q
\leq
(\abs{x} + \abs{d})^{p}/\CH^{p}+q
\leq
(\abs{x}^{p} + p\abs{x}^{p-1}\abs{d} + \abs{d}^{p})/\CH^{p}+q
\\ \leq
\abs{x}^{p}/\CH^{p} + p(\abs{x}/\CH)^{p-1}\abs{d} + 2q
\leq
\abs{x}^{p}/\CH^{p} + p((\abs{x}/\CH)^{p}/p'+\abs{d}^{p}/p) + 2q
\\ \leq
3(\abs{x}^{p}/\CH^{p} + q),
\end{multline*}
so it suffices to show
\[
3(p-1)\frac{\abs{d}}{\CH^{p}}
(m^{p}-\abs{x}^{p})
\leq
C_{4} \abs{d}^{p} (3q/2)^{1/p-1} (m^{p}/\CH^{p}+q).
\]
\[
\impliedby
3(p-1)\frac{\abs{d}}{\CH^{p}}
p (m-\abs{x})m^{p-1}
\leq
C_{4} \abs{d}^{p} (3q/2)^{1/p-1} (m^{p}/\CH^{p}+q).
\]
\[
\impliedby
3(p-1)\frac{\abs{d}}{\CH^{p}}
p (m-\abs{x})m^{p-1}
\leq
C_{4} \abs{d}^{p} (3/2)^{1/p-1} (m^{p}/\CH^{p}+q)^{1/p}.
\]
\[
\impliedby
3(p-1)\abs{d}
p (m-\abs{x})m^{p-1}
\leq
C_{4} \abs{d}^{p} (3/2)^{1/p-1} m.
\]
Since $m-\abs{x} \leq \min(\abs{d},m)$, this holds provided that
\begin{equation}
\label{eq:cond10}
C_{4} \geq 3(p-1) p (3/2)^{1-1/p}.
\end{equation}

\textbf{Case 2b.}
Suppose now $\abs{d}^{p}\geq q/2$.
Let
\begin{multline*}
I(t) := (u+e)(\abs{x+td}^{p}/\CH^{p}+q+\abs{td}^{p})^{1/p}
-C\tilde{v}(q+\abs{td}^{p})^{1/p} + \tilde{C} \tilde{v} (q+\abs{td}^{p})^{1/p} \ln(1+(u+e)/\tilde{v})
\\ -
\frac{u}{p'} (m^{p}/\CH^{p}+q)^{1/p}
- \frac{u}{p} \frac{q+\abs{x}^{p}/\CH^{p}}{(m^{p}/\CH^{p}+q)^{1-1/p}}
+ Cv q^{1/p} - \tilde{C} v q^{1/p} \ln(1+u/v)
\\ -
\frac{u\psi'(0)t}{p(m^{p}/\CH^{p}+q)^{1-1/p}}
- \Bigl(\frac{1}{p'}(m^{p}/\CH^{p}+q)^{1/p} + \frac{1}{p} \frac{q+\abs{x}^{p}/\CH^{p}}{(m^{p}/\CH^{p}+q)^{1-1/p}}
+ \tilde{C} q^{1/p}/(1+u/v)\Bigr) e.
\end{multline*}
The claim \eqref{eq:claim-m-changes} is equivalent to $I(1) \leq 0$.
From the previous cases, we know that $I(t) \leq 0$ if $t$ is so small that either $\abs{x+td}=m$ or $\abs{td}^{p} \leq q/2$.
Hence, it suffices to show $I'(t) \leq 0$ for all $t \in [0,1]$ such that $\abs{x+td}\geq m$ and $\abs{td}^{p} \geq q/2$.
We compute
\begin{align*}
I'(t)
&=
(u+e) \frac{\psi'(t)+pt^{p-1}\abs{d}^{p}}{p(\abs{x+td}^{p}/\CH^{p}+q+\abs{td}^{p})^{1-1/p}}
\\ &\quad -\frac{\tilde{v} t^{p-1} \abs{d}^{p}}{(q+\abs{td}^{p})^{1-1/p}}(C - \tilde{C}\ln(1+(u+e)/\tilde{v}))
- \frac{u \psi'(0)}{p (m^{p}/\CH^{p}+q)^{1-1/p}}
\\ &\leq
(u+e) \abs{d} \frac{\abs{x+td}^{p/p'}/\CH^{p}+\abs{td}^{p-1}}{(\abs{x+td}^{p}/\CH^{p}+q+\abs{td}^{p})^{1-1/p}}
\\ &\quad
-\frac{\tilde{v} t^{p-1} \abs{d}^{p}}{(q+\abs{td}^{p})^{1-1/p}}(C - \tilde{C}\ln(2))
+ \frac{u \abs{x}^{p/p'} \abs{d}/\CH^{p}}{(m^{p}/\CH^{p}+q)^{1-1/p}}
\\ &\leq
(u+e) \abs{d} 2^{1/p}
-\frac{\tilde{v} \abs{d}}{3^{1-1/p}}(C - \tilde{C}\ln(2)) + u \abs{d}.
\end{align*}
This is negative provided that
\begin{equation}
\label{eq:cond11}
C \geq 3^{1-1/p}(1+2^{1/p}) + \tilde{C}\ln(2).
\end{equation}

\textbf{Conclusion of the proof.}
The conditions \eqref{eq:cond5}, \eqref{eq:cond6}, \eqref{eq:cond7}, \eqref{eq:cond8}, \eqref{eq:cond9}, \eqref{eq:cond10}, \eqref{eq:cond11} amount to $\tilde{C} = 4\sqrt{2}$ and
\begin{multline}
\label{eq:cond-C}
C \geq \max(2 \cdot (3/2)^{2-1/p}/p + 2, 1/p + (1+(1/p+1/p^{2})^{p})^{1/p},\\ 3^{1-1/p}(1+(1+(1/p')^{p})^{1/p}), 9+3(p-1) p (3/2)^{1-1/p}, \\
3^{1-1/p}(1+2^{1/p}) ) + \tilde{C}\ln(2).
\end{multline}
The latter condition holds for $C=21$.
\end{proof}

\appendix
\section{Extrapolation}
\label{sec:extrapolation}
Here, we show how weighted estimates such as \eqref{eq:w-BDG} can be used to obtain a different kind of vector-valued estimates,
where the vector space is a UMD Banach function space.
We recall that a Banach space $X$ is UMD if and only if every martingale transform with bounded coefficients on a filtered probability space $\Omega$ defines a bounded operator on every $L^{r}(\Omega,X)$ with $r \in (1,\infty)$; we refer to \cite[Section 4]{MR3617205} for many equivalent characterizations (including via the eponymous unconditional summability of martingale differences) and examples of UMD spaces.
We start with a very simple extrapolation result, which uses only the duality argument introduced in \cite{MR0284802}.
 
\begin{proposition}[Banach function space valued extrapolation]
\label{prop:UMD-extrapolation}
For every $r\in (1,\infty)$ and every UMD Banach function space $X$ over a $\sigma$-finite measure space $(S,\Sigma,\mu)$, there exists a constant $C_{r,X}<\infty$ such that the following holds.

Let $(\Omega,(\calF_{n})_{n\in\N})$ be a filtered probability space and let $f,g : \Omega \times S \to \R_{\geq 0}$ be measurable functions such that, for some $A<\infty$, $\mu$-almost every $s\in S$, and every weight $w$ on $\Omega$, we have
\begin{equation}
\label{eq:UMD-extrapolation:A1}
\int_{\Omega} f(\cdot,s) w
\leq
A \int_{\Omega} g(\cdot,s) w^{*}.
\end{equation}
Then, we have
\begin{equation}
\label{eq:UMD-extrapolation:Lp}
\norm{f}_{L^{r}(\Omega,X)} \leq C_{r,X} A \norm{g}_{L^{r}(\Omega,X)}.
\end{equation}
\end{proposition}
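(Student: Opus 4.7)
My plan is to use a Rubio de Francia–type extrapolation argument adapted to UMD Banach function spaces. First, by duality in $L^{r}(\Omega,X)$ applied to the nonnegative function $f$, there exists a nonnegative $h : \Omega \times S \to \R_{\geq 0}$ with $\norm{h}_{L^{r'}(\Omega,X')} \leq 1$ such that
\[
\norm{f}_{L^{r}(\Omega,X)} = \int_{\Omega} \int_{S} f(\omega,s) h(\omega,s) \dif\mu(s) \dif P(\omega),
\]
where $X'$ denotes the Köthe dual of $X$; since UMD implies reflexivity, $X'$ coincides with the Banach dual of $X$ and is again a UMD Banach function space.

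The second step is to build a Rubio de Francia iterate. Let $\calM$ denote the lattice-valued martingale maximal operator acting pointwise in $s$, namely $(\calM\phi)(\omega,s) := \sup_{n} \EE{\phi(\cdot,s) \given \calF_{n}}(\omega)$. Granting for the moment that $\calM$ is bounded on $L^{r'}(\Omega,X')$ with operator norm $M$, the geometric series
\[
H := \sum_{k=0}^{\infty} (2M)^{-k} \calM^{k} h
\]
converges in $L^{r'}(\Omega,X')$ and satisfies $H \geq h$, $\norm{H}_{L^{r'}(\Omega,X')} \leq 2$, and the pointwise ``$A_{1}$''-type bound $(\calM H)(\omega,s) \leq 2M\cdot H(\omega,s)$ for a.e.\ $(\omega,s)$. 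In particular, for a.e.\ fixed $s$, the function $\omega \mapsto H(\omega,s)$ is a weight on $\Omega$ whose martingale maximal satisfies $(H(\cdot,s))^{*} \leq 2M\, H(\cdot,s)$.

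With $H$ in hand, apply the hypothesis \eqref{eq:UMD-extrapolation:A1} with $w = H(\cdot,s)$ for a.e.\ $s$, integrate against $\dif\mu(s)$, use Fubini and the $A_{1}$-bound above, and then invoke Hölder's inequality for the pair $(X,X')$ to obtain
\[
\norm{f}_{L^{r}(\Omega,X)} = \int_{\Omega}\!\int_{S} f \cdot H \, \dif\mu \dif P
\leq A \int_{\Omega}\!\int_{S} g \cdot \calM H \,\dif\mu\dif P
\leq 2AM \, \norm{g}_{L^{r}(\Omega,X)} \norm{H}_{L^{r'}(\Omega,X')},
\]
which yields \eqref{eq:UMD-extrapolation:Lp} with $C_{r,X} = 4M$.

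The main obstacle is the suppressed ingredient in step two, namely the lattice Doob inequality, i.e.\ the boundedness of $\calM$ on $L^{r'}(\Omega,X')$. For UMD Banach function spaces, both Boyd indices lie strictly between $1$ and $\infty$, and the corresponding lattice Hardy–Littlewood maximal estimate is the celebrated theorem of Rubio de Francia; the martingale version can be obtained from it by a standard transference, or directly by convexifying $X'$ to a BFS with nontrivial convexity and applying a Fefferman–Stein-type vector-valued martingale maximal inequality. Verifying (or, more likely, citing) this lattice Doob inequality is the step where the UMD hypothesis on $X$ is genuinely used.
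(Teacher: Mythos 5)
Your proposal is correct and rests on the same key ingredient as the paper's proof, namely the lattice Doob maximal inequality — boundedness of the martingale maximal operator $\calM$ acting in the probability variable on $L^{r'}(\Omega,X')$ — which the paper cites from \cite[Theorem 3.2]{MR4181372} and you recall via Bourgain/Rubio de Francia plus transference. Two structural points are worth flagging. First, the Rubio de Francia iteration producing $H$ is superfluous here: since the hypothesis \eqref{eq:UMD-extrapolation:A1} holds for \emph{every} weight, you can simply apply it with $w=h(\cdot,s)$ and estimate $\int_\Omega\!\int_S g\,\calM h \leq \norm{g}_{L^{r}(\Omega,X)}\norm{\calM h}_{L^{r'}(\Omega,X')}\leq M\norm{g}_{L^{r}(\Omega,X)}$, giving $C_{r,X}=M$ directly (your route gives $4M$). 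The iterate is needed only when the hypothesis requires a pointwise $A_1$ test weight, which is not the case here. Second, and more substantively, your opening step ``by duality there exists a nonnegative $h$ with $\norm{h}_{L^{r'}(\Omega,X')}\leq 1$ attaining $\norm{f}_{L^{r}(\Omega,X)}$'' is exactly the point the paper spends its effort on. You are implicitly invoking the identification $L^{r}(\Omega,X)^{*}=L^{r'}(\Omega,X')$ together with norm attainment; this is legitimate here because $X$ UMD implies $X$ (hence $X'$) is reflexive, $X^{*}$ has the Radon--Nikod\'ym property, and $L^{r'}(\Omega,X')$ is reflexive so the dual norm is attained on the weakly compact unit ball. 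The paper instead avoids this black box by approximating $f$ by an $X$-simple function $\mathfrak{f}$ (with values in a finite-dimensional subspace), using compactness of the finite-dimensional unit sphere to obtain a jointly measurable almost-extremizer $h(\mathfrak{f}(\omega,\cdot))$, and then passing to the limit. Your shortcut is valid but should be stated with the reflexivity/RNP justification rather than as immediate duality.
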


Proposition~\ref{prop:UMD-extrapolation} is not new, in the sense that it also follows from the usual Rubio de Francia extrapolation argument and (a martingale version of) \cite[Theorem 2.4.1]{arxiv:1912.09347}.
However, the direct proof is substantially easier.

\begin{proof}
Truncating $f$, we may assume that the left-hand side of \eqref{eq:UMD-extrapolation:Lp} is finite.

Recall that the Banach function space $X$ has an \emph{associate Banach function space} $X'$, which is again a Banach function space on $(S,\Sigma,\mu)$ with the property that, for every $h\in X'$, we have
\[
\norm{h}_{X'} = \sup_{f\in X, \norm{f}_{X}\leq 1} \int_{S} fh \dif\mu,
\]
and all these integrals converge absolutely.
In particular, $X'$ is isomorphic to a closed subspace of the dual space of $X$.
The associate space is \emph{norming} in the sense that, for every $f\in X$, we have
\[
\norm{f}_{X} = \sup_{h\in X', \norm{h}_{X'}\leq 1} \int_{S} fh \dif\mu,
\]
see e.g.\ \cite[\textsection 71]{MR0222234} for the proof of this fact.

Next, we need a measurable selection of $h(f)$ that almost extremize the supremum on the right-hand side.
In concrete spaces $X$, it is frequently possible to explicitly find such a selection.
For an abstract Banach function space $X$, it seems necessary to reduce to a finite-dimensional subspace first.

Since $X$ is UMD, it is reflexive \cite[Theorem 4.3.3]{MR3617205}.
It follows that the norm of $X$ is absolutely continuous \cite[\textsection 73, Theorem 2]{MR0222234}.
Therefore, a version of the dominated convergence theorem holds in $X$ \cite[\textsection 72, Theorem 2]{MR0222234}.
Hence, we can approximate $f$ by simple functions in $L^{r}(\Omega,X)$, that is, finite linear combinations of characteristic functions of product subsets of $\Omega \times S$.

Let $\epsilon>0$, and let $\mathfrak{f}$ be such a simple function with
\[
\norm{f - \mathfrak{f}}_{L^{r}(\Omega,X)} < \epsilon.
\]
Then $\mathfrak{f}(\omega,\cdot)$ takes values in a finite-dimensional subspace of $X$ as $\omega\in\Omega$ varies.
On this finite-dimensional subspace, we may choose a measurable map $f \mapsto h(f)$ such that
\begin{equation}
\label{eq:h-norm}
\norm{h(f)}_{X'} \leq \norm{f}_{X}^{r-1},
\quad\text{and}
\end{equation}
\begin{equation}
\label{eq:h-alm-extremizer}
(1+\epsilon) \int_{S} f h(f) \dif \mu \geq \norm{f}_{X}^{r}.
\end{equation}
We do this by first choosing such a map on the unit sphere, using compactness of the unit sphere, and then extend it by homogeneity.

By the hypothesis \eqref{eq:UMD-extrapolation:A1} with the weights
\[
w(\omega,s) := h(\mathfrak{f}(\omega,\cdot))(s),
\]
we obtain
\begin{equation}
\label{eq:f-dualized}
\int_{\Omega} \int_{S} f(\omega,s) w(\omega,s) \dif\mu(s) \dif\omega
\leq
A \int_{S} \int_{\Omega} g(\omega,s) w^{*}(\omega,s) \dif\mu(s) \dif\omega,
\end{equation}
where $w^{*}(\omega,s) := \sup_{n\geq 0} \E(w(\cdot,s)|\calF_{n})(\omega)$.
By duality,
\[
\eqref{eq:f-dualized} \leq
A \int_{\Omega} \norm{g(\omega,\cdot)}_{X} \norm{w^{*}(\omega,\cdot)}_{X'} \dif\omega
\leq
A \norm{g}_{L^{r}(\Omega,X)} \norm{w^{*}}_{L^{r'}(\Omega,X')}.
\]
Since $X'$ is a closed subspace of the Banach space dual of $X$, it is again UMD \cite[Proposition 4.2.17]{MR3617205}.
By the UMD-valued maximal inequality \cite[Theorem 3.2]{MR4181372}, we have
\[
\norm{w^{*}}_{L^{r'}(\Omega,X')} \leq C_{r',X'} \norm{w}_{L^{r'}(\Omega,X')}.
\]
By \eqref{eq:h-norm}, we have
\begin{equation}
\label{eq:w-norm}
\norm{w}_{L^{r'}(\Omega,X')} \leq \norm{\frakf}_{L^{r}(\Omega,X)}^{r-1}.
\end{equation}
It follows that
\[
\eqref{eq:f-dualized}
\leq
A C_{r',X'} \norm{g}_{L^{r}(\Omega,X)} \norm{\frakf}_{L^{r}(\Omega,X)}^{r-1}.
\]
Finally, by duality and \eqref{eq:w-norm}, we have
\[
\int_{\Omega} \int_{S} (f-\mathfrak{f})(\omega,s) w(\omega,s) \dif\mu(s) \dif\omega
\leq
\norm{f-\mathfrak{f}}_{L^{r}(\Omega,X)} \norm{\frakf}_{L^{r}(\Omega,X)}^{r-1}.
\]
Combining the last two inequalities, we obtain
\begin{align*}
\norm{\mathfrak{f}}_{L^{r}(\Omega,X)}^{r}
&\leq
(1+\epsilon) \int_{\Omega} \int_{S} \mathfrak{f}(\omega,s) h(\mathfrak{f}(\omega,\cdot))(s) \dif\mu(s) \dif\omega
\\ &\leq
(1+\epsilon) (A C_{r',X'} \norm{g}_{L^{r}(\Omega,X)} + \norm{f-\mathfrak{f}}_{L^{r}(\Omega,X)}) \norm{\frakf}_{L^{r}(\Omega,X)}^{r-1}.
\end{align*}
Since $\epsilon$ can be chosen arbitrarily small, this implies the claim \eqref{eq:UMD-extrapolation:Lp}.
\end{proof}

\begin{remark}
The space $L^{r}(\Omega)$ in Proposition~\ref{prop:UMD-extrapolation} can be replaced by another Banach function space $Y$, provided that $Y'(X')$ is a norming subspace of the dual space of $Y(X)$, and, most importantly, that the martingale maximal operator is bounded on $Y'(X')$.
One example is when $Y$ is a weighted $L^{r}$ space and $X=\R$; the appropriate maximal bounds in this case have been proved in \cite{MR3916937}.
\end{remark}

As a direct consequence of the weighted BDG inequality \eqref{eq:w-BDG} (or rather just the scalar case from \cite{MR3688518}) and Proposition~\ref{prop:UMD-extrapolation}, we recover the following BDG-type inequality.
\begin{corollary}[{\cite[Theorem 1.1]{MR4181372}}]
\label{thm:UMD-BFS-BDG}
Let $r,S,X,C_{r,X}$ be as in Proposition~\ref{prop:UMD-extrapolation}.
Let $(\Omega,(\calF_{n})_{n\in\N})$ be a filtered probability space.
Let $f : \N \times \Omega \times S \to \R$ be a function with $f_{0}(\omega,s)=0$ such that
\begin{enumerate}
\item for every $n$, $f_{n}(\cdot,\cdot)$ is $\calF_{n}\times\Sigma$-measurable, and
\item for almost every $s\in S$, $(f_{n}(\cdot,s))_{n\in\N}$ is a martingale with respect to $(\calF_{n})_{n}$.
\end{enumerate}
Then,
\begin{equation}
\label{eq:UMD-BFS-BDG}
\norm{ \sup_{n\in\N} \abs{f_{n}(\cdot,\cdot)} }_{L^{r}(\Omega,X)}
\leq 16 (\sqrt{2}+1) C_{r,X}
\norm[\Big]{ \bigl( \sum_{n\geq 1} \abs{f_{n}(\cdot,\cdot)-f_{n-1}(\cdot,\cdot)}^{2} \bigr)^{1/2} }_{L^{r}(\Omega,X)}.
\end{equation}
\end{corollary}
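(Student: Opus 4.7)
The plan is to reduce the statement to a direct application of Proposition~\ref{prop:UMD-extrapolation}, using the scalar weighted Burkholder inequality of \cite{MR3688518} (that is, the $X = \R$, $p = 2$ case of Theorem~\ref{thm:w-BDG}) to verify its hypothesis. I would set
\[
F(\omega,s) := \sup_{n\in\N} \abs{f_{n}(\omega,s)}, \qquad G(\omega,s) := \Bigl( \abs{f_{0}(\omega,s)}^{2} + \sum_{n\geq 1} \abs{f_{n}(\omega,s)-f_{n-1}(\omega,s)}^{2} \Bigr)^{1/2}.
\]
The joint $\calF_{n}\times\Sigma$-measurability of each $f_{n}$ ensures that $F$ and $G$ are jointly measurable on $\Omega \times S$, so that both sides of \eqref{eq:UMD-BFS-BDG} are well-defined.

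Next, I would verify the pointwise-in-$s$ weighted hypothesis \eqref{eq:UMD-extrapolation:A1}. For $\mu$-almost every $s \in S$, hypothesis (2) of the corollary says that $(f_{n}(\cdot,s))_{n\in\N}$ is a scalar martingale with respect to $(\calF_{n})_{n}$. Applying the scalar weighted BDG inequality of \cite{MR3688518} to this martingale gives, for every weight $w$ on $\Omega$,
\[
\int_{\Omega} F(\cdot,s) w \leq A \int_{\Omega} G(\cdot,s) w^{*},
\]
with an absolute constant $A$. This is exactly \eqref{eq:UMD-extrapolation:A1} with $f$ and $g$ replaced by $F$ and $G$.

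Proposition~\ref{prop:UMD-extrapolation} then immediately yields
\[
\norm{F}_{L^{r}(\Omega,X)} \leq C_{r,X} A \norm{G}_{L^{r}(\Omega,X)},
\]
which is \eqref{eq:UMD-BFS-BDG}. There is no real obstacle here: the statement is a genuine corollary, and the only work is to identify the right $F$ and $G$, to observe joint measurability, and to invoke the scalar weighted BDG slicewise. All of the analytic content (martingale structure against the vector variable, $A_{1}$-type duality, UMD-valued maximal bound on $X'$) has already been packaged inside Proposition~\ref{prop:UMD-extrapolation}.
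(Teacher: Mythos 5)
Your proposal is correct and follows essentially the same route as the paper: slicewise application of the scalar weighted BDG inequality of \cite{MR3688518} to verify hypothesis \eqref{eq:UMD-extrapolation:A1}, followed by Proposition~\ref{prop:UMD-extrapolation}. The paper additionally notes a routine reduction, via monotone convergence, to finitely many times $n\leq N$ so that $f^{*}(\omega,\cdot)\in X$ for a.e.\ $\omega$ and the duality step inside Proposition~\ref{prop:UMD-extrapolation} applies cleanly, but this is a minor technical point and does not change the structure of the argument.
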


\begin{proof}
By the monotone convergence theorem, we may consider a finite sequence of times $n\leq N$, so that the left-hand side of \eqref{eq:UMD-BFS-BDG} is finite if the right-hand side is.
Let
\[
f(\omega,s) := \max_{n\leq N} \abs{f_{n}(\omega,s)}.
\]
This is an $\calF_{N}\times\Sigma$-measurable function, and for a.e.\ $\omega$ we have $f(\omega,\cdot) \in X$.
By \cite[Theorem 1.1]{MR3688518}, the hypothesis \eqref{eq:UMD-extrapolation:A1} of Proposition~\ref{prop:UMD-extrapolation} holds for the above function $f$ with $A=16(\sqrt{2}+1)$ and
\[
g(\omega,s) = \bigl( \sum_{n=1}^{N} \abs{f_{n}(\omega,s)-f_{n-1}(\omega,s)}^{2} \bigr)^{1/2}.
\qedhere
\]
\end{proof}

\begin{remark}
In \cite[Theorem 1.1]{MR4181372}, also a converse inequality to \eqref{eq:UMD-BFS-BDG} has been proved.
That converse inequality does not follow from the main result of \cite{MR3567926}, due to the restriction to weights that are almost surely continuous in time in that result.
In \cite{arxiv:2106.11279}, we extend the main result of \cite{MR3567926} in such a way that it recovers the converse to \eqref{eq:UMD-BFS-BDG}.
\end{remark}

\paragraph*{Acknowledgment}
I thank Mark Veraar for making me aware of the recent developments concerning vector-valued extensions of Burkholder--Davis--Gundy inequalities.
I thank the anonymous referee for helpful improvement suggestions.

\printbibliography
\end{document}